\newtheorem{prop}{Proposition}[section]
\newtheorem{lem}[prop]{Lemma}
\newtheorem{coro}[prop]{Corollary}
\newtheorem{teo}[prop]{Theorem}
\theoremstyle{definition}
\newenvironment{defi}
  {\pushQED{\qed}\defix}
  {\popQED\enddefix}
\newenvironment{exe}
  {\pushQED{\qed}\exex}
  {\popQED\endexex}
\newenvironment{obs}
  {\pushQED{\qed}\obsx}
  {\popQED\endobsx}
\newcommand{\depth}{\textnormal{depth}}
\newcommand{\lcm}{\textnormal{l.c.m.}}
\providecommand{\keywords}[1]{\textit{Keywords:  } #1}
\providecommand{\MSC}[1]{\textit{2010 Mathematics subject classification:  } #1}
\title{Characterization of Fundamental Networks}
\author{Manuela A D Aguiar
\thanks{Faculdade de Economia, Universidade do Porto, Rua Dr Roberto Frias, 4200-464 Porto, Portugal}\ \thanks{Dep. Matem\'{a}tica, Centro de Matem\'{a}tica, Universidade do Porto, Rua do Campo Alegre, 687, 4169-007 Porto, Portugal}, Ana P S Dias
\footnotemark[2]\ \thanks{Centro de Matem\'{a}tica da Universidade do Porto, Rua do Campo Alegre, 687, 4169-007 Porto, Portugal
\newline
(maguiar@fep.up.pt, apdias@fc.up.pt, ptcsoares@fc.up.pt)\newline 
The authors were partially supported by CMUP (UID/MAT/00144/2013), which is funded by FCT (Portugal) with national (MEC) and European structural funds (FEDER), under the partnership agreement PT2020.  PS acknowledges the funding by FCT through the PhD grant PD/BD/105728/2014. } and Pedro Soares\footnotemark[2]\ \footnotemark[3]
}
\date{\vspace{-13mm}}
\begin{document}

 \maketitle 

\begin{abstract}
In the framework of coupled cell systems, a coupled cell network describes graphically the dynamical dependencies between individual dynamical systems, the cells. The fundamental network of a network reveals the hidden symmetries of that network. 
Subspaces defined by equalities of coordinates which are flow-invariant for any coupled cell system consistent with a network structure are called the network synchrony subspaces.
Moreover, for every synchrony subspaces, each network admissible system restricted to that subspace is a dynamical systems consistent with a smaller network. The original network is then said to be a lift of the smaller network.
We characterize networks such that: its fundamental network is a lift of the network; the network is a subnetwork of its fundamental network, and the network is a fundamental network.
The size of cycles in a network and the distance of a cell to a cycle are two important properties concerning the description of the network architecture. In this paper, we relate these two architectural properties in a network and its fundamental network.

\vspace{1em}
\hspace{-1.8em}
\keywords{Fundamental Networks; Network Connectivity; Rings.}

\vspace{1em}
\hspace{-1.8em}
\MSC{05C25; 05C60; 05C40}
\end{abstract}

\section{Introduction}
Coupled cell networks describe influences between cells. A network is represented by a graph where each cell and each edge have a specific type. A cell type defines the nature of a cell, and an edge type defines the nature of the influence. A dynamical system that respects the network structure is a coupled cell system admissible by the network. Stewart, Golubitsky and Pivato \cite{SGP03}, and Golubitsky, Stewart and T\"{o}r\"{o}k \cite{GST05} formalized the concepts of coupled cell network and coupled cell system. They showed that there exists an intrinsic relation between coupled cell systems and coupled cells networks, proving in particular, that robust patterns of synchrony of cells are in one-to-one correspondence to balanced colorings of cells in the network -- see \cite[theorem 6.5]{SGP03}.
Coupled cell networks and coupled cell systems have been addressed, for example, from the bifurcation point of view, \cite{ADGL09,G14,K09,M14}.

Recently, Rink and Sanders \cite{RS14,RS15} and Nijholt, Rink and Sanders \cite{NRS14} developed some dynamical techniques for homogenous networks with asymmetric inputs, i.e., networks where all cells have the same type and each cell receives only one edge of each type. 
When the network has a semi-group structure, they have calculated normal forms of coupled cell systems and used the hidden symmetries of the network to derive Lyapunov-Schmidt reduction that preserves hidden symmetries.
They have introduced the concept of fundamental network which reveals the hidden symmetries of a network (see definition~\ref{deffundnet} of \S~\ref{fundsec}).
 A fundamental network is a Cayley Graph of a semi-group. The dynamics associated to a fundamental network can be studied using the revealed hidden symmetries. 
Moreover, the dynamics associated to a network can be derived from the dynamics associated to its fundamental network, \cite[theorem 10.1]{RS14}.

The one-to-one correspondence between balanced colorings and synchrony subspaces leads to the definition of quotient network, such that every dynamics associated to the quotient network is the restriction to a synchrony subspace of the dynamics associated to the original network. A subnetwork of a given network is a network whose set of cells is a subset of the cells of the given network and the respective incoming edges, such that the cells are not influenced by any cell outside the subnetwork. Thus, the dynamics associated to the cells in a subnetwork is independent of the dynamics associated to the other cells. DeVille and Lerman \cite{DL15} highlighted the concepts of quotient network and subnetwork using network fibrations, i.e., functions between networks that respect their structure. In particular, they showed that every surjective network fibration defines a quotient network and every injective network fibration defines a subnetwork (\S~\ref{frisec}).

In this work, we will focus on the relation between a homogenous network and its fundamental network. The work is divided in two independent parts.
 In the first part, we show that the fundamental network construction preserves the quotient network relation and transforms the subnetwork relation in the quotient network relation (\S~\ref{fundfrisec}). Moreover, we characterize the networks such that: its fundamental network is a lift of the network (\S~\ref{fundliftsec}); the network is a subnetwork of its fundamental network (\S~\ref{fundsubsec}); and the network is a fundamental network (\S~\ref{fundchasec}). 
In order to do that, we introduce the properties of backward connectivity and transitivity for a cell. The backward connectedness for a cell means that we can reach that cell from any other cell in the network. 
This signifies that the dynamics associated to that cell is, directly or indirectly, affected by the dynamics associated to every other cell in the network.
 The transitivity for a cell is the existence of network fibrations pointing that cell to any other cell. This property is similar to the vertex-transitivity used in the characterization of Cayley-Graphs of groups \cite[\S 16]{B74}. The vertex-transitivity is the ability of interchanging any two nodes using a bijective fibration, which reveals the symmetries of a graph.

In the second part, we relate the architecture of a network and of its fundamental network. In particular, we study two concepts of a network's architecture: cycles in the network and the distance of cells to a cycle (\S~\ref{arcsec}). We denote by rings the cycles in the network involving only one edge type, and by depth the maximal distance of any cell to a ring. Ring networks have been studied, for example, in Ganbat \cite{G14} and Moreira \cite{M14}. We start by looking to networks having a group structure (\S~\ref{strufundsec}). Then we show that a network and its fundamental network have equal depth (\S~\ref{depthfundsec}), and that the size of the rings in a fundamental network is a (least common) multiple of the size of some network rings (\S~\ref{ringsfundsec}). Last, we describe the architecture of the fundamental networks of networks that have only one edge type.

The text is organized as follows. Sections~\ref{ccn}, \ref{fundsec} and \ref{frisec} review the concepts of coupled cell networks, fundamental networks and network fibrations, respectively. Section~\ref{fundfrisec} characterizes fundamental networks. Section~\ref{arcsec} defines rings and depth of  a network. Finally, \S~\ref{strufundsec} relates rings and depth of a network and of its fundamental network.

\section{Coupled cell networks}\label{ccn}

In this section, we recall a few facts concerning coupled cell networks following \cite{GST05,SGP03}. We also introduce the notion of backward connected network.

A \emph{directed graph} is a tuple $G=(C,E,s,t)$, where $c\in C$ is a cell and $e \in E$ is a directed edge from the source cell, $s(e)$, to the target cell, $t(e)$. We assume that the sets of cells and edges are finite.
The \emph{input set} of a cell $c$, denoted by $I(c)$, is the set of edges that target $c$.
Following \cite[definition 2.1.]{GST05} and imposing that cells of the same type are input equivalent we define (coupled cell) network.

\begin{defi}
A \emph{(coupled cell) network} $N=(G,\sim_C,\sim_E)$ is a directed graph, $G$, together with two equivalence relations: one on the set  of cells, $\sim_C$, and another on the set of edges, $\sim_E$. The \emph{cell type} of a cell is its $\sim_C$-equivalence class and the \emph{edge type} of an edge is its $\sim_E$-equivalence class. It is assumed that:\\
(i) edges of the same type have source cells of the same type and target cells of the same type;\\
(ii)  cells of the same type are input equivalent. That is, if two cells have the same cell type, then there is an edge type preserving bijection between their input sets. 
\end{defi}

We say that a network is a \emph{homogeneous network} whenever there is only one cell type. A network is a \emph{homogeneous network with asymmetric inputs} if each cell receives exactly one edge of each edge type. We will focus our interest in homogeneous networks with asymmetric inputs.

In \cite{RS14}, Rink and Sanders pointed out that a homogeneous network with asymmetric inputs can be represented by functions $\sigma_{i}: C \rightarrow C$, for each edge type $i$, such that there is an edge with type $i$ from $\sigma_{i}(c)$ to $c$.
We write $\sigma=[a_1\;\dots\;a_n]$ for the function $\sigma:\{1,\dots,n\}\rightarrow \{1,\dots,m\}$ such that $\sigma(j)=a_j$, for $j=1,\dots,n$. For examples of homogeneous networks with asymmetric inputs see figure~\ref{fig::hnai}, where distinct edge types are represented by different symbols.

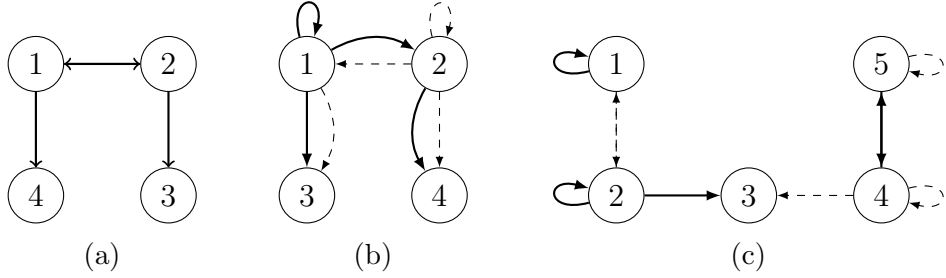
\begin{figure}[h]
\begin{subfigure}[t]{0.25\textwidth}
\centering
\begin{tikzpicture}
\node (n1) [draw, circle]               {1};
\node (n2) [draw, circle] [right=of n1]  {2};
\node (n3) [draw, circle] [below=of n2] {3}; 
\node (n4) [draw, circle] [left=of n3] {4}; 
\draw[->, thick] (n2) -- (n1);
\draw[->, thick] (n1) -- (n2);
\draw[->, thick] (n2) -- (n3);
\draw[->, thick] (n1) -- (n4);
\end{tikzpicture}
\caption{}
\label{fig:hnai1}
\end{subfigure}
\begin{subfigure}[t]{0.25\textwidth}
\centering
\begin{tikzpicture}
\node (n1) [shape=circle,draw]  {1};
\node (n2) [circle,draw]  [right=of n1] {2};
\node (n3) [shape=circle,draw] [below=of n1] {3};
\node (n4) [circle,draw]  [right=of n3] {4};
\draw[-latex, thick] (n1) to [loop above] (n1);
\draw[-latex, thick] (n1) to [bend left] (n2);
\draw[-latex, thick] (n1) to  (n3);
\draw[-latex, thick] (n2) to [bend right] (n4);
\draw[-latex, dashed] (n2) to  (n1);
\draw[-latex, dashed] (n2) to [loop above] (n2);
\draw[-latex, dashed] (n1) to [bend left] (n3);
\draw[-latex, dashed] (n2) to  (n4);
\end{tikzpicture}
\caption{}
\label{fig:hnai2}
\end{subfigure}
\begin{subfigure}[t]{0.45\textwidth}
\centering
\begin{tikzpicture}
\node (n1) [shape=circle,draw]  {1};
\node (n2) [circle,draw]  [below=of n1] {2};
\node (n3) [shape=circle,draw] [right=of n2] {3};
\node (n4) [circle,draw]  [right=of n3] {4};
\node (n5) [circle,draw]  [above=of n4] {5};

\draw[-latex, thick] (n1) to [loop left] (n1);
\draw[-latex, thick] (n2) to [loop left] (n2);
\draw[-latex, thick] (n2) to  (n3);
\draw[-latex, thick] (n5) to  (n4);
\draw[-latex, thick] (n4) to  (n5);

\draw[-latex, dashed] (n2) to  (n1);
\draw[-latex, dashed] (n1) to  (n2);
\draw[-latex, dashed] (n4) to  (n3);
\draw[-latex, dashed] (n4) to [loop right] (n4);
\draw[-latex, dashed] (n5) to [loop right] (n5);
\end{tikzpicture}
\caption{}
\label{fig:hnai3}
\end{subfigure}
\caption{Homogeneous networks with asymmetric inputs: \subref{fig:hnai1} network with one edge type represented by the function $\sigma_1=[2\;1\;2\;1]$; \subref{fig:hnai2} network with two edge types, where the solid edges are represented by $\sigma_1=[1\;1\;1\;2]$ and the dashed edges are represented by $\sigma_2=[2\;2\;1\;2]$; \subref{fig:hnai3} network represented by the functions $\sigma_1=[1\;2\;2\;5\;4]$, for solid edges, and $\sigma_2=[2\;1\;4\;4\;5]$, for dashed edges. The network \subref{fig:hnai3} is backward connected, and the networks \subref{fig:hnai1} and \subref{fig:hnai2} are not.}
\label{fig::hnai}
\end{figure}

A \emph{directed path} in a network $N$ is a sequence $(c_0,c_1,\dots,c_{m-1},c_m)$ of cells in $N$ such that for every $j=1,\dots,m$ there is an edge in $N$ from $c_{j-1}$ to $c_j$. 

\begin{obs}
Compositions of representative functions define directed paths in the network. Let $N$ be a homogenous network with asymmetric inputs represented by the functions  $\left(\sigma_i\right)_{i=1}^k$. There exists a directed path from cell $c$ to cell $d$ if and only if there are $1\leq j_1,\dots,\ j_m\leq k$ such that
\[\sigma_{j_m}\circ\dots \circ \sigma_{j_1}(d)=c.\qedhere\]
\end{obs}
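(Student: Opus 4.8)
The plan is to argue by induction on the length $m$ of the directed path, using only the defining property of the representative functions: for each edge type $i$ the unique edge of type $i$ targeting a cell $x$ has source $\sigma_i(x)$, so that there is an edge from $y$ to $x$ in $N$ if and only if $y=\sigma_i(x)$ for some edge type $i$. This bidirectional reformulation is the single fact the whole argument rests on.

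For the forward implication, suppose $(c_0,c_1,\dots,c_m)$ is a directed path with $c_0=c$ and $c_m=d$. I would peel off the first edge: it runs from $c_0$ to $c_1$ and carries some edge type $i$, so $c=c_0=\sigma_i(c_1)$. The remaining sequence $(c_1,\dots,c_m)$ is a directed path from $c_1$ to $d$ of length $m-1$, and the induction hypothesis yields indices $j_1,\dots,j_{m-1}$ with $\sigma_{j_{m-1}}\circ\dots\circ\sigma_{j_1}(d)=c_1$. Applying $\sigma_i$ and setting $j_m:=i$ gives $\sigma_{j_m}\circ\dots\circ\sigma_{j_1}(d)=c$. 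The base case $m=1$ is exactly the observation above: a single edge from $c$ to $d$ has some type $j_1$, hence $c=\sigma_{j_1}(d)$. (If one wishes to allow the degenerate path $(c)$, the case $c=d$ corresponds to the empty composition, i.e.\ the identity.)

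For the converse, given indices $j_1,\dots,j_m$ with $\sigma_{j_m}\circ\dots\circ\sigma_{j_1}(d)=c$, I would set $d_0:=d$ and $d_\ell:=\sigma_{j_\ell}\circ\dots\circ\sigma_{j_1}(d)$ for $1\le \ell\le m$, so that $d_m=c$. For each $\ell$ we have $d_\ell=\sigma_{j_\ell}(d_{\ell-1})$, so the defining property of $\sigma_{j_\ell}$ produces an edge of type $j_\ell$ from $d_\ell$ to $d_{\ell-1}$. Listing the cells in the order $(d_m,d_{m-1},\dots,d_0)$ then exhibits a directed path from $c$ to $d$, as required.

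I do not expect a genuine obstacle here; the only point demanding care is the bookkeeping of the composition order against the traversal order. Traversing a path from $c$ towards $d$ corresponds to applying the representative functions to $d$ in the reverse order, precisely because each $\sigma_i$ maps a target cell to its source. Once this reversal is pinned down, both implications are routine inductions.
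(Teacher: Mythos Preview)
Your argument is correct. The paper states this as a remark without proof: it is treated as an immediate consequence of the defining property of the representative functions (an edge of type $i$ targets $c$ from $\sigma_i(c)$), so there is nothing in the paper to compare against beyond that single observation. Your inductive unpacking is precisely the standard way one would make the remark rigorous, and your care with the reversal of composition order versus traversal order is exactly the only point worth flagging.
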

%


\begin{defi}
We say that a network $N$ is \emph{backward connected for a cell} $c$ if for any cell $c'\neq c$ there exists a directed path between $c'$ and $c$.
The network $N$ is \emph{backward connected} if it is backward connected for some cell.
\end{defi}


\begin{exe}
Consider the networks in figure~\ref{fig::hnai}. For the network in figure~\ref{fig:hnai1}, there is no directed path from cell $4$ to cells $1$, $2$ and $3$, neither from cell $3$ to cells $1$, $2$ and $4$. Thus the network is not backward connected. Similarly, we see that the network in figure~\ref{fig:hnai2} is not backward connected. Now, for the network in figure~\ref{fig:hnai3}, there is a directed path starting in cell $1$, $2$, $4$ or $5$ to cell $3$. Thus, the network is backward connected for cell $3$.
\end{exe}


Following \cite{NRS14}, the input network for a cell of a network contains the cells that affect, directly or indirectly, that cell.
The \emph{input network} for $c\in C$, denoted by $N_{(c)}$, is the network with set of cells $C_{(c)}$ and set of edges $E_{(c)}$, where 
$$C_{(c)}=\left\{ c\right\}\cup\left\{c'\in C\  |\  \textrm{exists a  directed path in }N\textrm{ from }c' \textrm{ to }c \right\},$$$$ E_{(c)}=\left\{e\in E\  |\  t(e)\in C_{(c)}\right\}.$$

Observe that every input network for a cell is backward connected for that cell. See figure~\ref{fig:subhnai3} for an example.

\begin{figure}[h]
\centering
\begin{tikzpicture}
\node (n1) [draw, shape=circle] {1};
\node (n2) [draw, shape=circle] [right=of n1] {2};

\draw[-latex, dashed] (n2) to  (n1);
\draw[-latex, dashed] (n1) to (n2);

\draw[-latex, thick] (n1) to [loop left] (n1);
\draw[-latex, thick] (n2) to [loop right] (n2);
\end{tikzpicture}
\caption{Input network of the network in figure~\ref{fig:hnai3} for cell $1$ (and for cell $2$). It is backward connected for cell $1$ (and for cell $2$).}
\label{fig:subhnai3}
\end{figure}
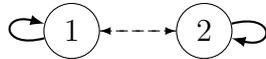

\section{Fundamental networks}\label{fundsec}

In this section, we recall the definition of fundamental network of a homogenous network with asymmetric inputs introduced by Nijholt et al. \cite{NRS14}. We present some examples of fundamental networks and remark that every fundamental network is backward connected. 

 The identity function in $C$ is denoted by $Id_C$, and we omit the subscript when it is clear from the context.

\begin{defi}[{\cite[definition  6.2]{NRS14}}]\label{deffundnet}
 Let $N$ be a homogeneous network with asymmetric inputs represented by the functions $\left(\sigma_i:C\rightarrow C\right)_{i=1}^k$. The \emph{fundamental network} of $N$ is the network  $\tilde{N}$ where the set of cells, $\tilde{C}$, is the semi-group generated by $Id$ and $\left(\sigma_i\right)_{i=1}^k$, and $\tilde{N}$ is represented by the functions $$\left(\tilde{\sigma_i}:\tilde{C}\rightarrow \tilde{C}\right)_{i=1}^k,$$ defined by $\tilde{\sigma_i}(\tilde{c})=\sigma_i\circ \tilde{c}$, for $\tilde{c}\in\tilde{C}$ and $i=1,\dots,k$.
\end{defi}


\begin{exe}\label{exefund}
Consider the network in figure~\ref{fig:hnai1}. This network is represented by the function $\sigma_1=[2\;1\;2\;1]$. Note that $\sigma_1^3=\sigma_1$, and the semi-group generated by $\sigma_1$ and $Id$ is $$\tilde{C}=\left\{Id, \sigma_1, \sigma_1^2\right\}.$$
The representative function, $\tilde{\sigma_1}$, of the fundamental network is obtained from the composition of $\sigma_1$ with each element of $\tilde{C}$:  $\tilde{\sigma}(\sigma_1^2)=\sigma_1$ and $\tilde{\sigma}(\sigma_1^j)=\sigma^{j+1}$, when $j=0,1$. The fundamental network is represented graphically in figure~\ref{fig:fundhnai1}.
\end{exe}

Figure~\ref{fig::fundhnai} displays the fundamental networks of the networks in figures~\ref{fig::hnai} and~\ref{fig:subhnai3}. Note that all the fundamental network in figure~\ref{fig::fundhnai} are backward connected for $Id$.

\begin{figure}[h]
\begin{subfigure}[t]{0.49\textwidth}
\centering
\begin{tikzpicture}
\node (n1) [draw, shape=circle,label=center:$Id$] {\phantom{0}};
\node (n2) [draw, shape=circle,label=center:$\sigma_1$] [above=of n1] {\phantom{0}};
\node (n3) [draw, shape=circle,label=center:$\sigma_1^2$] [right=of n2] {\phantom{0}};

\draw[-latex, thick] (n2) to (n1);
\draw[-latex, thick] (n3) to (n2);
\draw[-latex, thick] (n2) to (n3);
\end{tikzpicture}
\caption{}
\label{fig:fundhnai1}
\end{subfigure}
\begin{subfigure}[t]{0.49\textwidth}
\centering
\begin{tikzpicture}
\node (n4) [circle,draw,label=center:$\sigma_1^2$]  {\phantom{0}};
\node (n3) [shape=circle,draw,label=center:$\sigma_2$] [below=of n4] {\phantom{0}};
\node (n1) [shape=circle,draw,label=center:$Id$] [right=of n3] {\phantom{0}};
\node (n2) [circle,draw,label=center:$\sigma_1$]  [right=of n1] {\phantom{0}};
\node (n5) [circle,draw,label=center:$\sigma_2^2$]  [above=of n2] {\phantom{0}};

\draw[-latex, thick] (n2) to  (n1);
\draw[-latex, thick] (n4) to (n2);
\draw[-latex, thick] (n4) to  (n3);
\draw[-latex, thick] (n4) to [loop above] (n4);
\draw[-latex, thick] (n4) to [bend left] (n5);
\draw[-latex, dashed] (n3) to (n1);
\draw[-latex, dashed] (n5) to (n2);
\draw[-latex, dashed] (n5) to (n3);
\draw[-latex, dashed] (n5) to  (n4);
\draw[-latex, dashed] (n5) to [loop above] (n5);
\end{tikzpicture}
\caption{}
\label{fig:fundhnai2}
\end{subfigure}
\begin{subfigure}[t]{0.84\textwidth}
\centering
\begin{tikzpicture}
\node (n1) [draw, shape=circle] {6};
\node (n2) [draw, shape=circle] [right=of n1] {1};
\node (n3) [draw, shape=circle] [below=of n1] {7};
\node (n4) [draw, shape=circle] [right=of n3] {2};
\node (n5) [draw, shape=circle] [right=of n4] {3};
\node (n6) [draw, shape=circle] [right=of n5] {4};
\node (n7) [draw, shape=circle] [right=of n6] {8};
\node (n8) [draw, shape=circle] [above=of n7] {9};
\node (n9) [draw, shape=circle] [left=of n8] {5};

\draw[-latex, thick] (n2) to  (n1);
\draw[-latex, thick] (n1) to  (n2);
\draw[-latex, thick] (n4) to  (n3);
\draw[-latex, thick] (n3) to  (n4);
\draw[-latex, thick] (n4) to  (n5);
\draw[-latex, thick] (n9) to  (n6);
\draw[-latex, thick] (n8) to  (n7);
\draw[-latex, thick] (n7) to  (n8);
\draw[-latex, thick] (n6) to  (n9);

\draw[-latex, dashed] (n3) to  (n1);
\draw[-latex, dashed] (n4) to  (n2);
\draw[-latex, dashed] (n1) to  (n3);
\draw[-latex, dashed] (n2) to  (n4);
\draw[-latex, dashed] (n6) to  (n5);
\draw[-latex, dashed] (n7) to  (n6);
\draw[-latex, dashed] (n6) to  (n7);
\draw[-latex, dashed] (n9) to  (n8);
\draw[-latex, dashed] (n8) to  (n9);
\end{tikzpicture}
\caption{}
\label{fig:fundhnai3}
\end{subfigure}
\begin{subfigure}[t]{0.15\textwidth}
\centering
\begin{tikzpicture}
\node (n1) [draw, shape=circle,label=center:$Id$] {\phantom{0}};
\node (n2) [draw, shape=circle,label=center:$\sigma_2$] [below=of n1] {\phantom{0}};

\draw[-latex, dashed] (n2) to  (n1);
\draw[-latex, dashed] (n1) to (n2);

\draw[-latex, thick] (n1) to [loop left] (n1);
\draw[-latex, thick] (n2) to [loop left] (n2);
\end{tikzpicture}
\caption{}
\label{fig:fundsubhnai3}
\end{subfigure}
\caption{Fundamental networks of the networks in figure~\ref{fig:hnai1}, \subref{fig:hnai2}, \subref{fig:hnai3} and figure~\ref{fig:subhnai3}, respectively. The cells $1,\dots,9$ in \subref{fig:fundhnai3} correspond to the functions $\sigma_2\circ\sigma_1$, $\sigma_1$, $Id$, $\sigma_2$, $\sigma_1\circ \sigma_2$, $\sigma_2\circ\sigma_1^2$, $\sigma_1^2$, $\sigma_2^2$, $\sigma_1\circ\sigma_1^2$, respectively. In \S~\ref{frisec}, we see that the fundamental network in: \subref{fig:fundhnai1} is a quotient network and a subnetwork of the network in figure~\ref{fig:hnai1}; \subref{fig:fundhnai2} is neither a lift nor a quotient network of the network in figure~\ref{fig:hnai2}; \subref{fig:fundhnai3} is a lift of the network in figure~\ref{fig:hnai3}; \subref{fig:fundsubhnai3} is equal to the network in figure~\ref{fig:subhnai3}.}
\label{fig::fundhnai}
\end{figure}
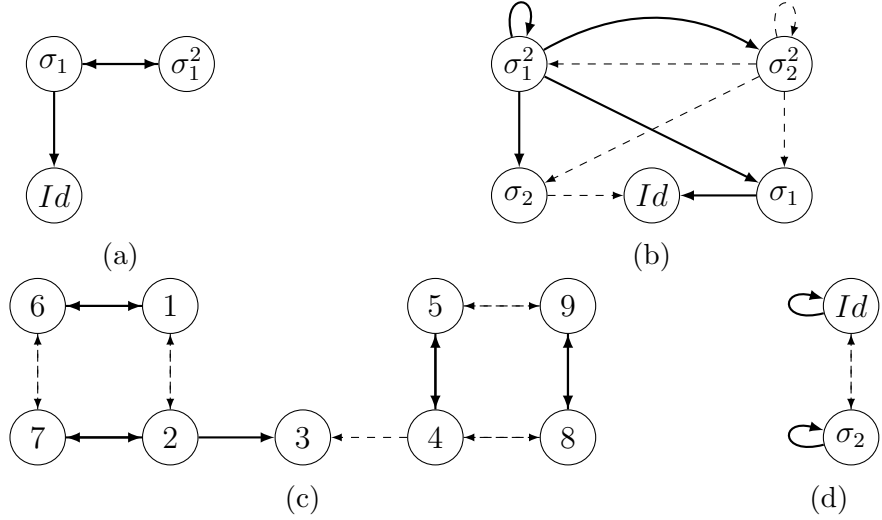

%
%
%
%
%

\begin{prop}\label{backconfund}
Every fundamental network of a homogenous network with asymmetric inputs is backward connected for $Id$.
\end{prop}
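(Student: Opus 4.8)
The plan is to reduce the statement, via the Remark on compositions of representative functions, to the definition of the cell set $\tilde C$ as a semigroup. First I would fix an arbitrary cell $\tilde c\in\tilde C$ with $\tilde c\neq Id$ and try to exhibit a directed path in $\tilde N$ from $\tilde c$ to $Id$.

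The crucial observation is that $\tilde C$ is, by definition, the semigroup generated by $Id$ and the functions $\sigma_1,\dots,\sigma_k$; hence every element of $\tilde C$ other than $Id$ can be written as a finite composition $\sigma_{j_m}\circ\cdots\circ\sigma_{j_1}$ with $m\geq 1$ and $1\leq j_1,\dots,j_m\leq k$ (any factor equal to $Id$ may be dropped). Next I would use that $\tilde N$ is represented by the functions $\tilde\sigma_i$ with $\tilde\sigma_i(\tilde x)=\sigma_i\circ\tilde x$ to compute
\[
\tilde\sigma_{j_m}\circ\cdots\circ\tilde\sigma_{j_1}(Id)=\sigma_{j_m}\circ\cdots\circ\sigma_{j_1}\circ Id=\sigma_{j_m}\circ\cdots\circ\sigma_{j_1}=\tilde c .
\]
By the Remark applied to $\tilde N$ (whose representative functions are the $\tilde\sigma_i$), this equality says precisely that there is a directed path in $\tilde N$ from $\tilde c$ to $Id$. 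Since $\tilde c\neq Id$ was arbitrary, $\tilde N$ is backward connected for $Id$, which is the desired conclusion.

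I do not anticipate a real obstacle: the argument is essentially an unwinding of the definitions of $\tilde C$ and of the $\tilde\sigma_i$, together with the path–composition Remark. The only points deserving care are purely bookkeeping ones — making sure that ``semigroup generated by $Id$ and the $\sigma_i$'' is used correctly, so that a non-identity element is genuinely a composition of the generators $\sigma_i$ themselves (allowing $\tilde c$ to be reconstructed by composing the $\tilde\sigma_i$ and applying the result to $Id$), and keeping the order of composition consistent with the convention used in the Remark.
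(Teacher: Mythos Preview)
Your proposal is correct and follows essentially the same approach as the paper: write an arbitrary cell $\tilde c\in\tilde C$ as a composition $\sigma_{l_1}\circ\dots\circ\sigma_{l_m}$ of the generators, then observe that $\widetilde{\sigma_{l_1}}\circ\dots\circ\widetilde{\sigma_{l_m}}(Id)=\tilde c$, which by the path--composition Remark yields a directed path from $\tilde c$ to $Id$. The paper's proof is slightly terser (it does not single out the case $\tilde c=Id$ or invoke the Remark explicitly), but the argument is the same.
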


\begin{proof}
Let $N$ be a homogenous network with asymmetric inputs represented by $\left(\sigma_i\right)_{i=1}^{k}$ and $\tilde{N}$ its fundamental network. 
If $\tilde{c}\in \tilde{C}$, then $\tilde{c}=\sigma_{l_1}\circ\dots\circ\sigma_{l_m}$, where $1\leq l_i\leq k$, and 
$$\widetilde{\sigma_{l_1}}\circ\dots\circ\widetilde{\sigma_{l_m}}(Id)=\sigma_{l_1}\circ\dots\circ\sigma_{l_m}\circ Id=\tilde{c}.$$
Hence $\tilde{N}$ is backward connected for $Id$.
\end{proof}

\section{Network fibrations}\label{frisec}

In this section, we recall the definition and some properties of network fibrations. We introduce a notion of transitivity and we recall the definitions of quotient network and subnetwork. Moreover, we highlight the relations of quotient network and subnetwork with surjective and injective network fibrations, respectively.

Roughly speaking, a graph fibration is a function between graphs that preserves the orientation of the edges and the number of input edges. Precisely, let $G=(C,E,s,t)$ and $G'=(C',E',s',t')$ be two graphs. A function $\varphi:G\rightarrow G'$ is a \emph{graph fibration} if $\varphi(s(e))=s'(\varphi(e))$, $\varphi(t(e))=t'(\varphi(e))$ and $\left.\varphi\right|_{I(c)}:I(c)\rightarrow I(\varphi(c))$ is a bijection, for every $c\in C$ and $e\in E$. 

A network fibration between networks is then defined as a graph fibration preserving the cell types and the edge types:

\begin{defi}[{\cite[definition 4.1.1]{DL15}}]
Consider two networks $N=(G,\sim_C,\sim_E)$ and $N'=(G',\sim_{C},\sim_{E})$. A \emph{network fibration} $\varphi:N\rightarrow N'$ is a graph fibration between $G$ and $G'$ such that
$c\sim_C  \varphi(c)$ and $e\sim_E \varphi(e)$.

We say that $N$ and $N'$ are \emph{isomorphic}, if there is a bijective network fibration between $N$ and $N'$.
\end{defi}

We do not distinguish isomorphic networks and we will say that two networks are the same if they are isomorphic.

\begin{exe}\label{exe:netfri}
Let $N$ be the network in figure~\ref{fig:hnai1}. Denote an edge of $N$ with source $s$ and target $t$ by $(s,t)$. Consider the function  $\varphi: N\rightarrow N$ such that $\varphi(1)=1$, $\varphi(2)=\varphi(4)=2$ and $\varphi(3)=3$, and $\varphi((1,2))=\varphi((1,4))=(1,2)$, $\varphi((2,1))=(2,1)$ and $\varphi((2,3))=(2,3)$.
The function $\varphi$ is a network fibration.
\end{exe}

In the case of homogeneous networks with asymmetric inputs, the network fibrations are characterized by the following property.

\begin{prop}[{\cite[proposition 5.3]{NRS14}}]\label{NRS1453}
Let $N$ and $N'$ be homogeneous networks with asymmetric inputs with set of cells $C$ and $C'$\!, and represented by the functions  $\left(\sigma_i\right)_{i=1}^k$ and $\left(\sigma'_i\right)_{i=1}^k$, respectively.  The function $\varphi:N\rightarrow N'$ is a network fibration if and only if $$\left.\varphi\right|_C \circ \sigma_i=\sigma'_i\circ \left.\varphi\right|_C,\quad \quad i=1,\dots,k.$$
\end{prop}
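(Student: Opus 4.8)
The plan is to use the combinatorial description of a homogeneous network with asymmetric inputs in which edges are indexed by pairs: for each cell $c$ and each edge type $i\in\{1,\dots,k\}$ there is exactly one edge of type $i$ targeting $c$, namely the edge from $\sigma_i(c)$ to $c$; I write this edge as $(c,i)$. With this bookkeeping, $s((c,i))=\sigma_i(c)$, $t((c,i))=c$, $I(c)=\{(c,1),\dots,(c,k)\}$, and the edge type of $(c,i)$ is $i$. The same description applies to $N'$ with $\sigma'_i$ in place of $\sigma_i$.

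First I would show that a network fibration $\varphi:N\rightarrow N'$ is completely determined by its restriction $\varphi|_C$ to cells, its edge component being forced to be $(c,i)\mapsto(\varphi(c),i)$. Indeed, $\varphi$ preserves edge types, so $\varphi((c,i))$ has type $i$; it preserves targets, so $t'(\varphi((c,i)))=\varphi(t((c,i)))=\varphi(c)$; and in $N'$ there is a \emph{unique} edge of type $i$ with target $\varphi(c)$, namely $(\varphi(c),i)$. Hence $\varphi((c,i))=(\varphi(c),i)$. The bijectivity of $\varphi|_{I(c)}:I(c)\rightarrow I(\varphi(c))$ is then automatic, and cell types are trivially preserved since both networks are homogeneous.

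Given this, the forward implication is immediate: applying the source-preservation axiom to $e=(c,i)$ yields $\varphi(\sigma_i(c))=\varphi(s((c,i)))=s'(\varphi((c,i)))=s'((\varphi(c),i))=\sigma'_i(\varphi(c))$, i.e. $\varphi|_C\circ\sigma_i=\sigma'_i\circ\varphi|_C$ for each $i$. For the converse, assume $\varphi|_C\circ\sigma_i=\sigma'_i\circ\varphi|_C$ for all $i$ and extend $\varphi$ to edges by $(c,i)\mapsto(\varphi(c),i)$. This map preserves edge types by construction, preserves targets since $t'((\varphi(c),i))=\varphi(c)=\varphi(t((c,i)))$, preserves sources since $s'((\varphi(c),i))=\sigma'_i(\varphi(c))=\varphi(\sigma_i(c))=\varphi(s((c,i)))$ by hypothesis, and restricts to a bijection $I(c)\rightarrow I(\varphi(c))$ because $(c,i)\mapsto(\varphi(c),i)$ is a bijection between the $k$-element sets $\{(c,i)\}_i$ and $\{(\varphi(c),i)\}_i$. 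Cell types are preserved trivially. Hence $\varphi$ is a network fibration.

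The only mildly delicate point, and the one I would state carefully, is the reduction in the second paragraph: it is precisely the asymmetry of the inputs (one edge of each type per cell) together with edge-type preservation that pins down the edge component of $\varphi$ from $\varphi|_C$, so that the single algebraic identity per edge type is equivalent to the whole fibration structure. Everything else is a routine check of the graph-fibration axioms against the explicit edge description.
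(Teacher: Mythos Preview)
Your proof is correct. The paper does not supply its own proof of this proposition---it simply cites \cite[proposition~5.3]{NRS14} and moves on---so there is no argument to compare against. Your approach is the natural one: the asymmetry of inputs (exactly one edge of each type per target) together with edge-type and target preservation forces the edge component of any fibration to be $(c,i)\mapsto(\varphi(c),i)$, after which the remaining fibration axiom (source preservation) is exactly the commutation relation $\varphi|_C\circ\sigma_i=\sigma'_i\circ\varphi|_C$.
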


\begin{exe}
Recall the network $N$ in figure~\ref{fig:hnai1} represented by the function $\sigma_1=[2\;1\;2\;1]$. Consider the network fibration, given in example~\ref{exe:netfri}, $\varphi: N\rightarrow N$ such that $\varphi=[1\;2\;3\;2]$. Observe that $\varphi \circ \sigma_1= [2\;1\;2\;1]= \sigma_1\circ \varphi$.
\end{exe}

A network fibration from a network which is backward connected for a cell $c$ is uniquely determined by the evaluation of the network fibration at cell $c$.

\begin{prop}\label{netfriback}
Let $A$ be a homogeneous network with asymmetric inputs and $\phi:A\rightarrow B$ a network fibration. If $A$ is backward connected for $c$, then the network fibration is uniquely determined by $\phi(c)$.
\end{prop}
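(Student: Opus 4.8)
The plan is to use Proposition~\ref{NRS1453} together with the observation that, in a network backward connected for $c$, every cell is reachable from $c$ by a directed path, which corresponds to a composition of representative functions applied to $c$. First I would set up notation: let $A$ be represented by $\left(\sigma_i\right)_{i=1}^k$ and $B$ by $\left(\sigma'_i\right)_{i=1}^k$, and suppose $\phi,\psi:A\rightarrow B$ are two network fibrations with $\phi(c)=\psi(c)$. By Proposition~\ref{NRS1453}, both $\left.\phi\right|_C$ and $\left.\psi\right|_C$ intertwine the representative functions: $\left.\phi\right|_C\circ\sigma_i=\sigma'_i\circ\left.\phi\right|_C$ and likewise for $\psi$. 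Iterating, for any word $j_1,\dots,j_m$ in $\{1,\dots,k\}$ one gets $\left.\phi\right|_C\circ\sigma_{j_m}\circ\cdots\circ\sigma_{j_1}=\sigma'_{j_m}\circ\cdots\circ\sigma'_{j_1}\circ\left.\phi\right|_C$, and the same identity with $\psi$ in place of $\phi$.

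Next I would invoke the Remark relating directed paths to compositions of representative functions: since $A$ is backward connected for $c$, for every cell $d\in C$ with $d\neq c$ there is a directed path from $c$ to $d$, hence there exist $j_1,\dots,j_m$ with $\sigma_{j_m}\circ\cdots\circ\sigma_{j_1}(c)=d$. (The case $d=c$ is trivial.) Applying the intertwining identity at the cell $c$ and using $\phi(c)=\psi(c)$ gives
\[
\phi(d)=\left.\phi\right|_C\bigl(\sigma_{j_m}\circ\cdots\circ\sigma_{j_1}(c)\bigr)=\sigma'_{j_m}\circ\cdots\circ\sigma'_{j_1}\bigl(\phi(c)\bigr)=\sigma'_{j_m}\circ\cdots\circ\sigma'_{j_1}\bigl(\psi(c)\bigr)=\psi(d).
\]
Since $d$ was arbitrary, $\left.\phi\right|_C=\left.\psi\right|_C$, i.e.\ $\phi$ and $\psi$ agree on all cells.

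Finally I would argue that agreement on cells forces agreement on edges, so $\phi=\psi$ as network fibrations. In a homogeneous network with asymmetric inputs each cell receives exactly one edge of each type; an edge of type $i$ into $d$ necessarily has source $\sigma_i(d)$, and a network fibration sends it to the unique edge of type $i$ into $\phi(d)$ (this is exactly the bijectivity of $\left.\varphi\right|_{I(d)}$ together with edge-type preservation). Hence once $\left.\phi\right|_C=\left.\psi\right|_C$, the actions on edges coincide as well. I do not expect a serious obstacle here; the only mild subtlety is being careful that backward connectedness for $c$ really does let us express \emph{every} cell as an image of $c$ under a composition (covering the $d=c$ case separately), and that the chosen path/word is applied at the correct argument $c$ when we use $\phi(c)=\psi(c)$. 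The core of the argument is just the inductive intertwining from Proposition~\ref{NRS1453}.
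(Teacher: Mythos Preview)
Your argument is correct and follows essentially the same route as the paper: invoke Proposition~\ref{NRS1453} to get the intertwining relation, use backward connectedness for $c$ to write each cell $d$ as a composition of the $\sigma_i$'s applied to $c$, and conclude that $\phi(d)$ is determined by $\phi(c)$. The paper's proof is marginally terser --- it simply exhibits the formula $\phi(d)=\sigma^2_{i_1}\circ\dots\circ\sigma^2_{i_m}\circ\phi(c)$ rather than introducing a second fibration $\psi$ --- and it does not spell out the edge-level agreement (which, as you note, is automatic for asymmetric inputs); but the substance is identical.

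One small slip to fix: backward connectedness for $c$ means there is a directed path \emph{from $d$ to $c$}, not from $c$ to $d$. By the Remark you cite, a directed path from $d$ to $c$ corresponds to $\sigma_{j_m}\circ\cdots\circ\sigma_{j_1}(c)=d$, which is exactly the formula you use. So your displayed computation is correct; only the phrase ``a directed path from $c$ to $d$'' needs to be reversed.
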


\begin{proof}
Let $A$ be a homogeneous network with asymmetric inputs and $\phi:A\rightarrow B$ a network fibration. Then $B$ is a homogeneous network with asymmetric inputs and has the same edge types of $A$. Suppose that $A$ and $B$ are represented by the functions $\left(\sigma^1_i\right)_{i=1}^{k}$  and $\left(\sigma^2_i\right)_{i=1}^{k}$, respectively, and $A$ is backward connected for $c$. Then for every cell $d\neq c$ in $A$ there are $\sigma^1_{i_1},\dots,\sigma^1_{i_m}$ with $1\leq i_1,\dots,i_m\leq k$ such that $d=\sigma^1_{i_1}\circ\dots\circ\sigma^1_{i_m}(c)$.
By proposition~\ref{NRS1453}, we know that $\phi\circ\sigma_i^1=\sigma_i^2\circ\phi$, for $1\leq i \leq k$. Then, for every cell $d\neq c$ in $A$,

\[\phi(d)=\phi\circ \sigma^1_{i_1}\circ\dots\circ\sigma^1_{i_m}(c)=\sigma^2_{i_1}\circ\dots\circ\sigma^2_{i_m}\circ\phi(c).\qedhere\] 
\end{proof}

In the context of graphs, vertex-transitivity is the ability of interchanging two cells of a graph using a bijective graph fibration. The vertex-transitivity reveals symmetries in a graph and it was usefully used in the characterization of Cayley graphs of groups, see \cite[{\S 16}]{B74}. Here, we introduce a weaker version of transitivity that will play a similar role in the characterization of fundamental networks.

\begin{defi}
Let $N$ be a homogenous network with asymmetric inputs and $c$ a cell in $N$. We say that $N$ is \emph{transitive for $c$} if for every cell $d$ in $N$, there is a network fibration $\phi_d:N\rightarrow N$ such that $\phi_d(c)=d$. We call the network $N$ \emph{transitive}, if it is transitive for some cell.
\end{defi}

\begin{exe}Consider the networks in figure~\ref{fig::hnai}. For the network in figure~\ref{fig:hnai1}, we have the following four network fibrations from the network to itself: $\phi_1=[1\;2\;1\;2]$, $\phi_2=[2\;1\;2\;1]$, $\phi_3=[1\;2\;3\;4]$, and $\phi_4=[2\;1\;4\;3]$. Then the network is transitive for cell $3$ (and for cell $4$). For the network in figure~\ref{fig:hnai2}, there is only one network fibration from the network to itself, the identity network fibration. Thus the network is not transitive.
\end{exe}

\subsection{Surjective network fibrations}

We recall now the definition of quotient networks using balanced colorings \cite{GST05,SGP03} and establish then their relation with surjective network fibrations, \cite{BV02,DL15}.

A \emph{coloring} on the set of cells of a network defines an equivalence relation on those cells. Following \cite{GST05,SGP03}, a coloring is \emph{balanced} if for any two cells with the same color there is an edge type preserving bijection between the corresponding input sets which also preserves the color of the source cells.

Each balanced coloring defines a quotient network, see \cite[\S 5]{GST05}. The \emph{quotient network} of a network with respect to a given balanced coloring $\bowtie$, is the network where the set of equivalence classes of the coloring, $[c]_{\bowtie}$, is the set of cells and there is an edge of type $i$ from $[c]_{\bowtie}$ to $[c']_{\bowtie}$, for each edge of type $i$ from a cell in the class $[c]_{\bowtie}$ to $c'$. 
We say that a network $L$ is a \emph{lift} of $N$, if $N$ is a quotient network of $L$.

\begin{exe}
 Let $N$ be the network in figure~\ref{fig:hnai1} and $\tilde{N}$ its fundamental network displayed in figure~\ref{fig:fundhnai1}. The coloring on the set of cells of $N$ with classes $\{1,3\}$, $\{2\}$ and $\{4\}$ is balanced because  cells $1$  and $3$ receive, each, an edge from cell $2$. The quotient network of $N$ with respect to this balanced coloring is $\tilde{N}$. Hence the fundamental network is a quotient network.
\end{exe}
\begin{exe}
 The network in figure~\ref{fig:hnai3} is a quotient network of its fundamental network displayed in figure~\ref{fig:fundhnai3} with respect to the balanced coloring with classes $\{1,6\}$, $\{2,7\}$, $\{4,8\}$ and $\{5,9\}$. In this case, the fundamental network is a lift.
\end{exe}

The balanced colorings are uniquely determined by surjective network fibrations, see \cite[{theorem 2}]{BV02}, \cite[{remark 4.3.3}]{DL15} or \cite[{theorem 8.3}]{SGP03}.

\begin{prop}[{\cite[{theorem 2}]{BV02}}]\label{quosurjnetfri}
A network $Q$ is a quotient network of a network $N$ if and only if there is a surjective network fibration from $N$ to $Q$. 
\end{prop}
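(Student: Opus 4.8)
The plan is to prove Proposition~\ref{quosurjnetfri} by establishing both directions of the equivalence, moving between the combinatorial description of quotient networks via balanced colorings and the structural description via surjective network fibrations.

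\textbf{Forward direction.} Suppose $Q$ is a quotient network of $N$, arising from a balanced coloring $\bowtie$ on the cells of $N$. I would define $\varphi:N\rightarrow Q$ on cells by $\varphi(c)=[c]_{\bowtie}$, which is surjective by construction of the quotient. To extend $\varphi$ to edges, fix for each color class a representative cell; for a cell $c$ with representative $\hat c$ in its class, the balanced condition supplies an edge-type-preserving bijection $\beta_c:I(c)\rightarrow I(\hat c)$ preserving source colors. Using the edges out of $\hat c$'s input set as the ``reference'' inputs, one sends each edge $e\in I(c)$ to the corresponding edge in $I([c]_{\bowtie})$ of $Q$ (the edge of type $i$ from $[s(e)]_{\bowtie}$ to $[c]_{\bowtie}$). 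I then check the graph fibration axioms: $\varphi(t(e))=[t(e)]_{\bowtie}=t'(\varphi(e))$ and $\varphi(s(e))=[s(e)]_{\bowtie}=s'(\varphi(e))$ follow from the definition of edges in $Q$, and $\varphi|_{I(c)}$ is a bijection onto $I([c]_{\bowtie})$ because it is the composite of $\beta_c$ with the bijection identifying $I(\hat c)$ with $I([\hat c]_{\bowtie})$ in the quotient. Cell-type and edge-type preservation are immediate since the quotient inherits types from $N$. Hence $\varphi$ is a surjective network fibration.

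\textbf{Reverse direction.} Suppose $\varphi:N\rightarrow Q$ is a surjective network fibration. Define the coloring $\bowtie$ on cells of $N$ by $c\bowtie c'$ iff $\varphi(c)=\varphi(c')$. I would show $\bowtie$ is balanced: if $c\bowtie c'$, then $\varphi|_{I(c)}:I(c)\rightarrow I(\varphi(c))$ and $\varphi|_{I(c')}:I(c')\rightarrow I(\varphi(c'))=I(\varphi(c))$ are both bijections, so $(\varphi|_{I(c')})^{-1}\circ\varphi|_{I(c)}:I(c)\rightarrow I(c')$ is a bijection; it preserves edge types because $\varphi$ does, and it preserves the colors of source cells because $\varphi(s(e))=s'(\varphi(e))$ forces $\varphi(s(e))=\varphi(s(e''))$ whenever $e\mapsto e''$ under this bijection. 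Then I identify the quotient network $N/\!\!\bowtie$ with $Q$: there is an obvious bijection $[c]_{\bowtie}\mapsto\varphi(c)$ on cells, well-defined and injective by definition of $\bowtie$ and surjective by surjectivity of $\varphi$, and it carries edges of $N/\!\!\bowtie$ to edges of $Q$ bijectively and type-preservingly by the fibration property. Thus $Q\cong N/\!\!\bowtie$ is a quotient of $N$.

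\textbf{Main obstacle.} The routine-looking but genuinely delicate point is the edge-level bookkeeping in the forward direction: one must check that the edge assignment in $\varphi$ is \emph{independent of the choice of class representatives} and genuinely yields a bijection $I(c)\rightarrow I([c]_{\bowtie})$ rather than merely a surjection or a not-well-defined map, using precisely that $\bowtie$ is balanced (so the comparison bijections between input sets are compatible). In the homogeneous-with-asymmetric-inputs setting this is cleaner because inputs are indexed by edge type, and one could alternatively invoke Proposition~\ref{NRS1453}, but for general networks the careful verification that the balanced-coloring data assembles into a consistent edge map is where the real content lies. I would therefore present that verification explicitly and keep the rest brief, citing \cite{BV02,DL15,SGP03} for the statement's provenance.
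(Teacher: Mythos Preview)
Your proposal is correct and follows exactly the same approach as the paper's sketch: define the projection $c\mapsto[c]_{\bowtie}$ for the forward direction, and for the converse define the coloring by fibers of $\varphi$ and verify balance. You have simply supplied the edge-level details that the paper omits (it only sketches the argument and defers to \cite{BV02,DL15,SGP03}), so there is no substantive difference in strategy.
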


For completeness, we sketch the proof here. 
If $Q$ is a quotient network of a network $N$, consider the associated balanced coloring. The function from $N$ to $Q$ that project each cell into its equivalence class is a surjective network fibration. 
On the other hand, given a surjective network fibration from $N$ to $Q$, consider the coloring such that two cells have the same color, when their evaluation by the network fibration is equal. This coloring is balanced, and the quotient network of $N$ with respect to this coloring is equal to $Q$.

\begin{exe}
 Let $N$ be the network in figure~\ref{fig:hnai3} and $\tilde{N}$ its fundamental network displayed in figure~\ref{fig:fundhnai3}. The network fibration from $\tilde{N}$ to $N$ given by $\varphi=[1\;2\;3\;4\;5\;1\;2\;4\;5]$ is surjective and $N$ is a quotient network of $\tilde{N}$.
\end{exe}
\begin{exe}
 There is no surjective fibration from the network in figure~\ref{fig:hnai2} to its fundamental network displayed in figure~\ref{fig:fundhnai2} neither a surjective fibration from the fundamental network to the network. Hence, in this case, the fundamental network is neither a lift nor a quotient network of the network.
\end{exe}

\subsection{Injective network fibrations}

We consider now subnetworks and their relation with injective network fibrations. We follow \cite[\S 5.2]{DL15}.

\begin{defi}
Let $N$ and $S$ be two networks with sets of cells and edges, respectively, $C$ and $E$, and $C'$ and $E'$. Then $S$ is a \emph{subnetwork} of $N$, if $C'\subseteq C$, $E'\subseteq E$ and for every $c'\in C'$ and every edge $e\in E$ with the target cell $t(e)=c'$, we have that $e\in E'$ and the source cell $s(e)\in C'$.
\end{defi}

\begin{exe}
 Consider the network in figure~\ref{fig:hnai1} and its fundamental network displayed in figure~\ref{fig:fundhnai1}. The fundamental network is a subnetwork.
\end{exe}

\begin{obs}
Let $N$ be a network with set of cells $C$.\\
(i) For every cell $c\in C$, the input network $N_{(c)}$ is a subnetwork of $N$.\\
(ii) The union of subnetworks of $N$ is a subnetwork of $N$.
\end{obs}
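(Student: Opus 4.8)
The plan is to prove the two assertions of the Remark directly from the definitions, since both are essentially set-theoretic bookkeeping about which edges are forced to lie in a subnetwork.

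For part (i), I would take $c \in C$ and check that $N_{(c)}$ satisfies the three conditions in the definition of subnetwork. By construction $C_{(c)} \subseteq C$ and $E_{(c)} = \{e \in E \mid t(e) \in C_{(c)}\} \subseteq E$, so the containments are immediate. The substantive point is the closure condition: given $c' \in C_{(c)}$ and an edge $e \in E$ with $t(e) = c'$, I must show $e \in E_{(c)}$ and $s(e) \in C_{(c)}$. That $e \in E_{(c)}$ is immediate from the definition of $E_{(c)}$, since $t(e) = c' \in C_{(c)}$. For $s(e) \in C_{(c)}$: either $c' = c$ or there is a directed path from $c'$ to $c$; prepending the edge $e$ (which goes from $s(e)$ to $c'$) yields a directed path from $s(e)$ to $c$, hence $s(e) \in C_{(c)}$ (or $s(e)=c$, which is also in $C_{(c)}$). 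This uses only the definition of directed path and of $N_{(c)}$.

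For part (ii), let $\{S_\alpha\}$ be a family of subnetworks of $N$, say $S_\alpha$ has cells $C_\alpha$ and edges $E_\alpha$, and let $S$ be the union, with cells $\bigcup_\alpha C_\alpha$ and edges $\bigcup_\alpha E_\alpha$. Again the containments in $C$ and $E$ are clear. For the closure condition, take $c' \in \bigcup_\alpha C_\alpha$, so $c' \in C_\beta$ for some $\beta$, and take $e \in E$ with $t(e) = c'$. Since $S_\beta$ is a subnetwork, $e \in E_\beta \subseteq \bigcup_\alpha E_\alpha$ and $s(e) \in C_\beta \subseteq \bigcup_\alpha C_\alpha$, as required. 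One should perhaps remark that the union is taken as a sub-object of $N$ with the inherited cell and edge types, so the result is again a network in the required sense.

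I do not expect any real obstacle here; the only thing to be careful about is the degenerate case $c' = c$ in part (i), where there need not be a nontrivial directed path from $c'$ to $c$, so the argument should phrase the path-extension step so that it also covers $s(e) = c$. If desired, the whole Remark can be stated and dispatched in a couple of sentences, but I would write it out in the two-step form above for clarity. I would also note in passing that (i) gives the link to backward connectedness already observed in the text (every $N_{(c)}$ is backward connected for $c$), so (i) and (ii) together show that backward-connected subnetworks containing a fixed cell are closed under union.
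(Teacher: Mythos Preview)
Your verification is correct. The paper states this remark without proof, treating both assertions as immediate from the definitions; your write-up simply makes that bookkeeping explicit, and the arguments you give for the closure condition in each part are exactly the right ones.
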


\begin{exe}
 Let $N$ be the network in figure~\ref{fig:hnai3}. The restriction of $N$ to the set of cells $\{1,2,4,5\}$ is a subnetwork of $N$. That restriction corresponds to the union of the input networks for the cells $1$, $2$, $4$ and $5$.
\end{exe}

\begin{prop}[{\cite[\S 5.2]{DL15}}]\label{subinjnetfri}
A network $N'$ is a subnetwork of $N$ if and only if there is an injective network fibration from $N'$ to $N$.
\end{prop}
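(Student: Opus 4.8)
The plan is to prove both implications separately, constructing the relevant fibration in one direction and the subnetwork inclusion in the other.

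\textbf{($\Leftarrow$) From an injective fibration to a subnetwork.} Suppose $\iota:N'\rightarrow N$ is an injective network fibration. First I would identify $N'$ with its image: since $\iota$ is injective, the cell set $C'=\iota(C')$ and edge set $E'=\iota(E')$ can be regarded as subsets of the cell set $C$ and edge set $E$ of $N$. It remains to verify the closure condition in the definition of subnetwork: if $c'\in C'$ and $e\in E$ has $t(e)=c'$, then $e\in E'$ and $s(e)\in C'$. This is exactly where the fibration axiom $\left.\iota\right|_{I(c)}:I(c)\rightarrow I(\iota(c))$ being a bijection does the work: the edges of $N$ targeting $c'$ are precisely $I(\iota(c_0))$ where $\iota(c_0)=c'$, and this set is the bijective image of $I(c_0)$, hence entirely contained in $\iota(E')=E'$; and for each such edge its source in $N$ equals $\iota$ of the source in $N'$, hence lies in $C'$. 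Since $\iota$ preserves cell and edge types, the types inherited from $N$ agree with those of $N'$, so $N'$ (under this identification) is genuinely a subnetwork of $N$.

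\textbf{($\Rightarrow$) From a subnetwork to an injective fibration.} Conversely, suppose $N'$ is a subnetwork of $N$, with $C'\subseteq C$ and $E'\subseteq E$. I would take $\iota:N'\rightarrow N$ to be the inclusion map on cells and on edges. It is clearly injective. That it is a graph fibration amounts to two things: it commutes with source and target maps, which is immediate since $s,t$ on $N'$ are just the restrictions of $s,t$ on $N$; and $\left.\iota\right|_{I(c')}:I_{N'}(c')\rightarrow I_N(c')$ is a bijection for every $c'\in C'$. Injectivity of this restriction is clear; surjectivity is precisely the subnetwork closure condition, which says every edge of $N$ targeting $c'$ already belongs to $E'$, i.e.\ lies in $I_{N'}(c')$. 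Finally $\iota$ preserves cell and edge types because $N'$ carries the types induced from $N$, so $\iota$ is a network fibration.

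\textbf{Main obstacle.} There is no deep obstacle here; the statement is essentially unwinding definitions. The one point requiring care is the identification in the ($\Leftarrow$) direction: a priori $N'$ is an abstract network, and one must check that transporting its structure along the injective $\iota$ yields a network that literally satisfies the subnetwork definition relative to $N$ — in particular that the edge set of the transported network is exactly $\{e\in E:\, t(e)\in C'\}$, which again relies on the input-set bijection. Since the paper adopts the convention that isomorphic networks are not distinguished, this identification is legitimate, and I would state it explicitly so the argument is clean. Alternatively, in the homogeneous-with-asymmetric-inputs setting one could invoke proposition~\ref{NRS1453} and phrase everything in terms of the representative functions $\left(\sigma_i\right)$, but the direct definitional argument above is shorter and works in full generality.
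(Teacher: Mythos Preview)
Your proposal is correct and follows exactly the same approach as the paper's (sketched) proof: the inclusion map gives the injective fibration in one direction, and in the other direction one identifies $N'$ with its image $\varphi(N')$ and checks it is a subnetwork. You have simply spelled out the details (in particular, the role of the input-set bijection in establishing the subnetwork closure condition) that the paper leaves implicit.
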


For completeness, we sketch the proof here.
If $N'$ is a subnetwork of $N$, then the embedding of $N'$ in $N$ is an injective network fibration. If $\varphi:N'\rightarrow N$ is an injective network fibration, then $N'$ is equal to $\varphi(N')$ which is a subnetwork of $N$.

\section{Fundamental networks and network fibrations}\label{fundfrisec}

In this section, we recall some results presented by Nijholt et al. in \cite{NRS14}. We show then that the fundamental network construction preserves the quotient network relation. Moreover, we see that the fundamental network construction does not preserve the subnetwork relation, but it transforms the subnetwork relation in the quotient network relation.

\begin{teo}[{\cite[theorem 6.4 \& remark 6.8 \& lemma 7.1]{NRS14}}]\label{NRS1464}
Let $N$ be a homogeneous network with asymmetric inputs and $\tilde{N}$ its fundamental network with set of cells $C$ and $\tilde{C}$, respectively. For every $c\in C$, there is a network fibration, $\varphi_{c}: \tilde{N}\rightarrow N$ given by 
$$\varphi_{c}(\tilde{c})=\tilde{c}(c), \quad \tilde{c}\in\tilde{C}.$$
The image of $\varphi_{c}$ is the input network $N_{(c)}$. 
Every network fibration from $\tilde{N}$ to $N$ is equal to $\varphi_{c}$ for some $c\in C$. The network $\tilde{N}$ and its fundamental $\tilde{\tilde{N}}$ are equal.
\end{teo}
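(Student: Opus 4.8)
The statement to prove is Theorem~\ref{NRS1464}, which bundles several facts. The plan is to treat each claim in turn, using Proposition~\ref{NRS1453} (the characterization of fibrations between homogeneous networks with asymmetric inputs via commutation with the representative functions) and Proposition~\ref{netfriback} (uniqueness of a fibration out of a backward connected network given its value at the distinguished cell), together with Proposition~\ref{backconfund} (every fundamental network is backward connected for $Id$).

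First I would verify that $\varphi_c$ is a network fibration. Since $\tilde N$ is represented by $(\tilde\sigma_i)$ with $\tilde\sigma_i(\tilde c)=\sigma_i\circ\tilde c$ and $N$ is represented by $(\sigma_i)$, by Proposition~\ref{NRS1453} it suffices to check $\varphi_c\circ\tilde\sigma_i=\sigma_i\circ\varphi_c$ on $\tilde C$; and indeed $\varphi_c(\tilde\sigma_i(\tilde c))=\varphi_c(\sigma_i\circ\tilde c)=(\sigma_i\circ\tilde c)(c)=\sigma_i(\tilde c(c))=\sigma_i(\varphi_c(\tilde c))$. Next, for the image: using the Remark that composites of representative functions describe directed paths, $\varphi_c(\tilde C)=\{\tilde c(c):\tilde c\in\tilde C\}$ is exactly $\{c\}\cup\{d : \text{there is a directed path from } d \text{ to } c\}=C_{(c)}$, and one checks the induced edge map is onto the edges of $N_{(c)}$, so the image is $N_{(c)}$. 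For the ``every fibration is some $\varphi_c$'' claim: given any fibration $\psi:\tilde N\to N$, set $c:=\psi(Id)$; since $\tilde N$ is backward connected for $Id$ by Proposition~\ref{backconfund}, Proposition~\ref{netfriback} forces $\psi=\varphi_c$ because $\varphi_c(Id)=Id(c)=c=\psi(Id)$ and both are fibrations out of the same backward connected network agreeing at $Id$.

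Finally, for $\tilde N=\tilde{\tilde N}$: let $N$ be represented by $(\sigma_i)$, so $\tilde N$ has cell set $\tilde C$ (the semigroup generated by $Id$ and the $\sigma_i$) and representative functions $\tilde\sigma_i:\tilde C\to\tilde C$, $\tilde\sigma_i(\tilde c)=\sigma_i\circ\tilde c$. Then $\tilde{\tilde N}$ has as cell set the semigroup generated by $Id_{\tilde C}$ and the $\tilde\sigma_i$, with representatives $\tilde{\tilde\sigma}_i(\gamma)=\tilde\sigma_i\circ\gamma$. The natural candidate isomorphism sends $\tilde c\in\tilde C$ to the map $L_{\tilde c}:\tilde C\to\tilde C$ given by left-composition $\gamma\mapsto\tilde c\circ\gamma$; one shows $\tilde\sigma_{i_1}\circ\cdots\circ\tilde\sigma_{i_m}=L_{\sigma_{i_1}\circ\cdots\circ\sigma_{i_m}}$ and $Id_{\tilde C}=L_{Id}$, so this is a bijection between the two cell sets carrying $\tilde\sigma_i$ to $\tilde{\tilde\sigma}_i$, hence a network isomorphism by Proposition~\ref{NRS1453}. (Alternatively, one can invoke the preceding parts: $\varphi_{Id}:\tilde{\tilde N}\to\tilde N$ is a fibration whose image is $\tilde N_{(Id)}=\tilde N$ since $\tilde N$ is backward connected for $Id$, and injectivity of $\varphi_{Id}$ follows because distinct elements of the generated semigroup act distinctly, giving the isomorphism.)

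The routine parts are the commutation checks and the semigroup bookkeeping; the main obstacle is the last claim $\tilde N=\tilde{\tilde N}$, specifically making the identification $\tilde c\leftrightarrow L_{\tilde c}$ precise and checking it is well-defined and injective — i.e., that two words in the $\sigma_i$ that are equal as elements of $\tilde C$ give the same left-composition operator (immediate) and, conversely, that the map $\tilde c\mapsto L_{\tilde c}$ is injective (because $L_{\tilde c}(Id)=\tilde c$), so that no collapsing occurs when passing to the second fundamental network. This is where the argument must be written carefully; everything else is a direct application of the already-established propositions.
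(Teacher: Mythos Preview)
The paper does not supply its own proof of Theorem~\ref{NRS1464}: the result is quoted from \cite{NRS14} (theorem 6.4, remark 6.8, lemma 7.1) and no proof environment follows the statement. So there is no in-paper argument to compare against.

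That said, your proposal is correct and self-contained using only the paper's own auxiliary results. The fibration check for $\varphi_c$ via Proposition~\ref{NRS1453}, the identification of the image with $N_{(c)}$ via the remark on directed paths, and the uniqueness argument via Propositions~\ref{backconfund} and~\ref{netfriback} are all clean and exactly how one would do it. For $\tilde N=\tilde{\tilde N}$, your map $\tilde c\mapsto L_{\tilde c}$ is the right one; the key observation $\tilde\sigma_{i_1}\circ\cdots\circ\tilde\sigma_{i_m}=L_{\sigma_{i_1}\circ\cdots\circ\sigma_{i_m}}$ gives surjectivity onto $\tilde{\tilde C}$, and evaluating at $Id$ gives injectivity, so the bijection is genuine. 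The intertwining $\tilde{\tilde\sigma}_i\circ L_{(\cdot)}=L_{(\cdot)}\circ\tilde\sigma_i$ then makes it a network isomorphism by Proposition~\ref{NRS1453}. Your parenthetical alternative via $\varphi_{Id}:\tilde{\tilde N}\to\tilde N$ is really the same map read backwards (since $\varphi_{Id}(L_{\tilde c})=L_{\tilde c}(Id)=\tilde c$), so it is not an independent route but a convenient repackaging. Nothing is missing; the bookkeeping you flag as the delicate point is handled correctly.
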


We prove next that the fundamental network construction preserves the quotient network relation.

\begin{prop}\label{fundprelift}
Let $N$ be a homogeneous network with asymmetric inputs.
If $Q$ is a quotient network of $N$, then $\tilde{Q}$ is a quotient network of $\tilde{N}$.
\end{prop}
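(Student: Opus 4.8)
The plan is to work entirely at the level of representative functions and use the characterization of network fibrations in Proposition~\ref{NRS1453} together with the translation between quotient networks and surjective fibrations in Proposition~\ref{quosurjnetfri}. Suppose $N$ is represented by $\left(\sigma_i:C\rightarrow C\right)_{i=1}^k$. Since $Q$ is a quotient network of $N$, by Proposition~\ref{quosurjnetfri} there is a surjective network fibration $\psi:N\rightarrow Q$; write $Q$ as represented by $\left(\tau_i\right)_{i=1}^k$, so that by Proposition~\ref{NRS1453} we have $\psi\circ\sigma_i=\tau_i\circ\psi$ for $i=1,\dots,k$. The goal is to produce a surjective network fibration $\tilde\psi:\tilde N\rightarrow\tilde Q$, where $\tilde C$ is the semi-group generated by $Id_C$ and the $\sigma_i$, and $\tilde D$ (cells of $\tilde Q$) is the semi-group generated by $Id_D$ and the $\tau_i$.

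The natural candidate is the map sending a word $\sigma_{l_1}\circ\cdots\circ\sigma_{l_m}$ in $\tilde C$ to the corresponding word $\tau_{l_1}\circ\cdots\circ\tau_{l_m}$ in $\tilde D$ (and $Id_C\mapsto Id_D$). First I would check this is well defined: if two words in the $\sigma_i$ represent the same element of $\tilde C$, then composing the intertwining relation $\psi\circ\sigma_i=\tau_i\circ\psi$ repeatedly gives $\psi\circ(\sigma_{l_1}\circ\cdots\circ\sigma_{l_m})=(\tau_{l_1}\circ\cdots\circ\tau_{l_m})\circ\psi$, and since $\psi$ is surjective, the composite $\tau_{l_1}\circ\cdots\circ\tau_{l_m}$ is determined by $\sigma_{l_1}\circ\cdots\circ\sigma_{l_m}$ (precompose the equation with a right inverse of $\psi$ as a set map, or simply note that $\tau$-word $=\psi\circ(\sigma\text{-word})\circ\psi^{-1}$ makes sense because $\psi$ is onto). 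So $\tilde\psi:\tilde C\rightarrow\tilde D$, $\tilde\psi(\tilde c)=\psi\circ\tilde c\circ(\text{any set-section of }\psi)$ — equivalently the word-to-word map — is a well-defined function; it is a semi-group homomorphism, in particular $\tilde\psi(Id)=Id$, and it is surjective since every generator $\tau_i$ is hit. Then I would verify the fibration condition via Proposition~\ref{NRS1453}: for $\tilde c\in\tilde C$, $\tilde\psi(\tilde\sigma_i(\tilde c))=\tilde\psi(\sigma_i\circ\tilde c)=\tau_i\circ\tilde\psi(\tilde c)=\tilde\tau_i(\tilde\psi(\tilde c))$, which is exactly $\tilde\psi\circ\tilde\sigma_i=\tilde\tau_i\circ\tilde\psi$. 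Hence $\tilde\psi:\tilde N\rightarrow\tilde Q$ is a surjective network fibration, and by Proposition~\ref{quosurjnetfri} again, $\tilde Q$ is a quotient network of $\tilde N$.

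The main obstacle I anticipate is the well-definedness of $\tilde\psi$ — making rigorous that a $\tau$-word depends only on the corresponding $\sigma$-word. The cleanest way around it is to avoid choosing a section and instead define $\tilde\psi(\tilde c)$ directly by the relation $\tilde\psi(\tilde c)\circ\psi=\psi\circ\tilde c$: for each $\tilde c\in\tilde C$ and each $d\in D$, pick any $c\in C$ with $\psi(c)=d$ and set $\tilde\psi(\tilde c)(d):=\psi(\tilde c(c))$; one checks this is independent of the choice of $c$ using that $\psi$ is a fibration (cells with the same $\psi$-image have color-preserving identified input histories, so $\psi\circ\tilde c$ is constant on $\psi$-fibers — this is precisely where the balanced-coloring/fibration structure is used). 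Everything else is a routine application of the intertwining identity of Proposition~\ref{NRS1453}. I would also remark in passing that $\tilde\psi$ restricted to the generators recovers $\psi$ up to the identifications $\varphi_c$ of Theorem~\ref{NRS1464}, which is a useful sanity check but not needed for the proof.
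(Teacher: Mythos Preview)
Your proposal is correct and follows essentially the same route as the paper: define $\tilde\psi$ by sending a word $\sigma_{l_1}\circ\cdots\circ\sigma_{l_m}$ to $\tau_{l_1}\circ\cdots\circ\tau_{l_m}$, prove well-definedness via the intertwining relation $\psi\circ\sigma_i=\tau_i\circ\psi$ together with surjectivity of $\psi$, check surjectivity of $\tilde\psi$ on generators, and verify the fibration condition of Proposition~\ref{NRS1453}. Your alternative description via a set-section of $\psi$ and the fiberwise formula $\tilde\psi(\tilde c)(d)=\psi(\tilde c(c))$ are just repackagings of the same argument and are not needed once the word-to-word definition is shown to be well defined.
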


\begin{proof}
Let $N$ be a homogeneous network with asymmetric inputs and $Q$ a quotient network of $N$. By proposition~\ref{quosurjnetfri}, there exists a surjective network fibration $\phi:N\rightarrow Q$ and $Q$ is a homogeneous network with asymmetric inputs. Suppose that $N$ and $Q$ are represented by $\left(\sigma_l\right)_{l=1}^{k}$ and $\left(\gamma_l\right)_{l=1}^{k}$, respectively. By proposition~\ref{NRS1453},
$$\phi\circ \sigma_i=\gamma_i\circ \phi,\quad \quad i=1,\dots,k.$$

Define the function $\tilde{\phi}:\tilde{N}\rightarrow \tilde{Q}$ such that  $\tilde{\phi}(Id_N)=Id_Q$ and for every cell $\sigma$ in $\tilde{N}$ such that $\sigma=\sigma_{i_1}\circ\dots\circ\sigma_{i_m}$ for some $1\leq i_1,\dots,i_m\leq k$, then $\tilde{\phi}$ is given by $\tilde{\phi}(\sigma)=\gamma_{i_1}\circ\dots\circ\gamma_{i_m}$. As we show next, $\tilde{\phi}$ is well-defined,  surjective and a network fibration.

Suppose that $\sigma=\sigma_{i_1}\circ\dots\circ\sigma_{i_m}=\sigma_{j_1}\circ\dots\circ\sigma_{j_{m'}}$, where $1\leq i_1,\dots,i_m\leq k$ and $1\leq j_1,\dots,j_{m'}\leq k$. Note that 
$$\gamma_{i_1}\circ\dots\circ\gamma_{i_m}\circ \phi=\phi\circ \sigma=\gamma_{j_1}\circ\dots\circ\gamma_{j_{m'}}\circ \phi.$$
Then $\gamma_{i_1}\circ\dots\circ\gamma_{i_m}$ and $\gamma_{j_1}\circ\dots\circ\gamma_{j_{m'}}$ are equal in the range of $\phi$.
Because $\phi$ is surjective, we have that $\gamma_{i_1}\circ\dots\circ\gamma_{i_m}=\gamma_{j_1}\circ\dots\circ\gamma_{j_{m'}}$. Thus the definition of $\tilde{\phi}$ does not depend on the choice of $i_1,\dots,i_m$. Moreover, $\tilde{\phi}$ is defined for every cell in $\tilde{N}$. Hence, $\tilde{\phi}$ is well-defined.

By definition $\tilde{\phi}(Id_N)=Id_Q$. Let $\gamma\neq Id_Q$ be a cell in $\tilde{Q}$. Then there are $1\leq i_1,\dots,i_m\leq k$ such that $\gamma=\gamma_{i_1}\circ\dots\circ\gamma_{i_m}$. Since $\tilde{\phi}(\sigma_{i_1}\circ\dots\circ\sigma_{i_m})=\gamma$, we have that $\tilde{\phi}$ is surjective.

From proposition~\ref{NRS1453}, the function $\tilde{\phi}$ is a network fibration if and only if
$\tilde{\phi}\circ \tilde{\sigma_i}=\tilde{\gamma_i}\circ \tilde{\phi}$, for every $i=1,\dots,k$.
Let $\sigma\neq Id_N$ be a cell in $\tilde{N}$. Then there are $1\leq i_1,\dots,i_m\leq k$ such that $\sigma=\sigma_{i_1}\circ\dots\circ\sigma_{i_m}$. 
For $1\leq i\leq k$, we have that $\tilde{\phi}\circ \tilde{\sigma_i}(Id_N)=\tilde{\gamma_i}\circ \tilde{\phi}(Id_N)$ and
\begin{align*}
\tilde{\phi}\circ \tilde{\sigma_i}(\sigma)= &\  \tilde{\phi}(\sigma_i\circ\sigma_{i_1}\circ\dots\circ\sigma_{i_m})=\gamma_i\circ\gamma_{i_1}\circ\dots\circ\gamma_{i_m}\\
= &\  \tilde{\gamma_i}(\gamma_{i_1}\circ\dots\circ\gamma_{i_m})=\tilde{\gamma_i}\circ \tilde{\phi}(\sigma_{i_1}\circ\dots\circ\sigma_{i_m})\ = \tilde{\gamma_i}\circ \tilde{\phi}(\sigma).
\end{align*} 
Hence $\tilde{\phi}$ is a surjective network fibration. By proposition~\ref{quosurjnetfri},  $\tilde{Q}$ is a quotient network of $\tilde{N}$.
\end{proof}

Using that $\tilde{N}=\tilde{\tilde{N}}$ (theorem~\ref{NRS1464}) and proposition~\ref{fundprelift}, we have the following. 

\begin{coro}
If $N$ is a quotient network of $L$ and $L$ is a quotient network of $\tilde{N}$, then $\tilde{N}=\tilde{L}$.
\end{coro}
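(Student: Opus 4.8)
The plan is to chain together the hypotheses using Proposition~\ref{fundprelift} and the idempotency statement $\tilde{N}=\tilde{\tilde{N}}$ from Theorem~\ref{NRS1464}. First I would record the two given quotient relations: $N$ is a quotient of $L$, and $L$ is a quotient of $\tilde{N}$. Applying Proposition~\ref{fundprelift} to the first relation yields that $\tilde{N}$ is a quotient of $\tilde{L}$. Applying Proposition~\ref{fundprelift} to the second relation yields that $\tilde{L}$ is a quotient of $\tilde{\tilde{N}}$, which by Theorem~\ref{NRS1464} equals $\tilde{N}$. So now I have $\tilde{N}$ a quotient of $\tilde{L}$ and $\tilde{L}$ a quotient of $\tilde{N}$.

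The remaining point is to conclude from a pair of mutual quotient relations that the two networks are in fact equal (isomorphic). I would argue this via Proposition~\ref{quosurjnetfri}: the two quotient relations give surjective network fibrations $\psi:\tilde{L}\rightarrow\tilde{N}$ and $\chi:\tilde{N}\rightarrow\tilde{L}$. Since both $\tilde{N}$ and $\tilde{L}$ are fundamental networks, they are backward connected (Proposition~\ref{backconfund}), and more importantly they have finitely many cells; a surjective self-map obtained by composing $\chi\circ\psi:\tilde{L}\rightarrow\tilde{L}$ (respectively $\psi\circ\chi$) is a surjection of a finite set to itself, hence a bijection, which forces both $\psi$ and $\chi$ to be bijective. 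A bijective network fibration is an isomorphism, so $\tilde{N}=\tilde{L}$.

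The main obstacle — really the only subtlety — is making the finiteness/cardinality argument clean, i.e. justifying that the existence of surjective fibrations in both directions between two finite networks forces them to be isomorphic. This is where one must invoke that the cell sets $\tilde{C}$ and the cell set of $\tilde{L}$ are finite (which holds because the generating functions act on finite cell sets, so the generated semigroups are finite), so that $|\tilde{C}_{\tilde{N}}|\leq|\tilde{C}_{\tilde{L}}|\leq|\tilde{C}_{\tilde{N}}|$ from the two surjections, giving equal cardinalities and hence that each surjection is a bijection. Everything else is a direct substitution of earlier results, so the proof should be just a few lines.

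\begin{proof}
Since $N$ is a quotient network of $L$, proposition~\ref{fundprelift} implies that $\tilde{N}$ is a quotient network of $\tilde{L}$. Since $L$ is a quotient network of $\tilde{N}$, proposition~\ref{fundprelift} implies that $\tilde{L}$ is a quotient network of $\tilde{\tilde{N}}$, and by theorem~\ref{NRS1464} we have $\tilde{\tilde{N}}=\tilde{N}$, so $\tilde{L}$ is a quotient network of $\tilde{N}$. By proposition~\ref{quosurjnetfri}, there are surjective network fibrations $\psi:\tilde{L}\rightarrow\tilde{N}$ and $\chi:\tilde{N}\rightarrow\tilde{L}$. As fundamental networks have finite cell sets, the surjections give $|\tilde{C}_{\tilde{L}}|\geq|\tilde{C}_{\tilde{N}}|\geq|\tilde{C}_{\tilde{L}}|$, so $\psi$ and $\chi$ are bijective network fibrations. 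Hence $\tilde{N}$ and $\tilde{L}$ are isomorphic, that is, $\tilde{N}=\tilde{L}$.
\end{proof}
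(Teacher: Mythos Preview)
Your proof is correct and follows essentially the same approach as the paper, which simply states that the corollary follows from $\tilde{N}=\tilde{\tilde{N}}$ (theorem~\ref{NRS1464}) and proposition~\ref{fundprelift} without further detail. You make explicit the step the paper leaves implicit, namely that mutual quotient relations between finite networks force isomorphism via a cardinality argument.
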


\begin{obs}
From the proof of proposition~\ref{fundprelift}, it also follows that if $\phi:N\rightarrow Q$ is a surjective network fibration, then there exists a surjective network fibration $\tilde{\phi}:\tilde{N}\rightarrow \tilde{Q}$ such that for every cell $c$ in $N$ the following diagram is commutative
$$\begin{tikzpicture}
\node (n1)  {$\tilde{N}$};
\node (n2)  [right=of n1] {$\tilde{Q}$};
\node (n3)  [below=of n1] {$N$};
\node (n4)  [right=of n3] {$Q$};

\draw[->] (n1) -- (n2) node [midway,above] {$\tilde{\phi}$};
\draw[->] (n2) -- (n4) node [midway,right] {$\varphi^{Q}_{\phi(c)}$};
\draw[->] (n1) -- (n3) node [midway,left] {$\varphi^{N}_{c}$};
\draw[->] (n3) -- (n4) node [midway,below] {$\phi$};
\end{tikzpicture}
$$
where $\varphi^{Q}_{\phi(c)}$ and $\varphi^{N}_{c}$ are given by theorem~\ref{NRS1464}.
\end{obs}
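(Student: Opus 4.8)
The plan is to verify commutativity of the square directly, by evaluating both composites on an arbitrary cell $\tilde{c}\in\tilde{C}$ and invoking the explicit formulas already available: $\varphi^{N}_{c}(\tilde{c})=\tilde{c}(c)$ and $\varphi^{Q}_{\phi(c)}(\tilde{q})=\tilde{q}(\phi(c))$ from theorem~\ref{NRS1464}, together with $\tilde{\phi}(Id_N)=Id_Q$ and $\tilde{\phi}(\sigma_{i_1}\circ\dots\circ\sigma_{i_m})=\gamma_{i_1}\circ\dots\circ\gamma_{i_m}$ from the proof of proposition~\ref{fundprelift}. So the goal is the identity $\phi\circ\varphi^{N}_{c}=\varphi^{Q}_{\phi(c)}\circ\tilde{\phi}$ of maps $\tilde{N}\rightarrow Q$.

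First I would dispose of the base cell: going down-then-across, $\phi\bigl(\varphi^{N}_{c}(Id_N)\bigr)=\phi(Id_N(c))=\phi(c)$; going across-then-down, $\varphi^{Q}_{\phi(c)}\bigl(\tilde{\phi}(Id_N)\bigr)=\varphi^{Q}_{\phi(c)}(Id_Q)=\phi(c)$. So the two paths agree at $Id_N$. Next, for a general cell write $\tilde{c}=\sigma_{i_1}\circ\dots\circ\sigma_{i_m}$ with $1\leq i_1,\dots,i_m\leq k$. Down-then-across gives $\phi\bigl(\sigma_{i_1}\circ\dots\circ\sigma_{i_m}(c)\bigr)$; applying the intertwining relation $\phi\circ\sigma_i=\gamma_i\circ\phi$ repeatedly pushes $\phi$ to the right and yields $\gamma_{i_1}\circ\dots\circ\gamma_{i_m}(\phi(c))$. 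Across-then-down gives $\varphi^{Q}_{\phi(c)}\bigl(\gamma_{i_1}\circ\dots\circ\gamma_{i_m}\bigr)=\gamma_{i_1}\circ\dots\circ\gamma_{i_m}(\phi(c))$. The two coincide, so the square commutes.

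The only point deserving care — and really the only potential obstacle — is that these manipulations must not depend on the chosen representation $\tilde{c}=\sigma_{i_1}\circ\dots\circ\sigma_{i_m}$; but this is precisely the well-definedness of $\tilde{\phi}$ already proved in proposition~\ref{fundprelift} (which used surjectivity of $\phi$), so nothing new is required. Alternatively, one can bypass the manipulation with representatives altogether: both $\phi\circ\varphi^{N}_{c}$ and $\varphi^{Q}_{\phi(c)}\circ\tilde{\phi}$ are network fibrations $\tilde{N}\rightarrow Q$ (a composite of network fibrations is a network fibration), the network $\tilde{N}$ is backward connected for $Id_N$ by proposition~\ref{backconfund}, and the two composites agree at $Id_N$ as computed above; hence they are equal by proposition~\ref{netfriback}. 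Surjectivity of $\tilde{\phi}$ is already part of proposition~\ref{fundprelift}, so there is nothing further to establish.
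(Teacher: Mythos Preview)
Your argument is correct. The paper itself gives no separate proof for this remark: it simply asserts that the commutativity ``also follows'' from the construction of $\tilde{\phi}$ in the proof of proposition~\ref{fundprelift}, and your direct verification using the intertwining relation $\phi\circ\sigma_i=\gamma_i\circ\phi$ (equivalently, your alternative via proposition~\ref{netfriback}) is exactly the check the paper leaves implicit.
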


The next example illustrates the fact that if a $S$ is a subnetwork of $N$ that does not implies the same relation between the corresponding fundamental network.In fact, we see that the existence of a (injective) network fibration $\phi:S\rightarrow N$ does not imply the existence of a network fibration $\tilde{\phi}:\tilde{S}\rightarrow \tilde{N}$.

\begin{exe}
Let $N$ be the network in figure~\ref{fig:hnai3} and $S$ the network in figure~\ref{fig:subhnai3}. The corresponding fundamental networks, $\tilde{N}$ and $\tilde{S}$, are given in figure~\ref{fig:fundhnai3} and \subref{fig:fundsubhnai3}.
There is an injective network fibration from $S$ to $N$, since $S$ is a subnetwork of $N$. However there is not an injective network fibration from $\tilde{S}$ to $\tilde{N}$, because $\tilde{S}$ is not a subnetwork of $\tilde{N}$. Moreover, there is not a network fibration from $\tilde{S}$ to $\tilde{N}$.
\end{exe}

In the following proposition, we show that the fundamental network construction transforms the subnetwork relation in the quotient network relation.

\begin{prop}\label{subfundquo}
Let $N$ be a homogeneous network with asymmetric inputs.
If $S$ is a subnetwork of $N$, then  $\tilde{S}$ is a quotient network of $\tilde{N}$.
\end{prop}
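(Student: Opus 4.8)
The plan is to mimic the proof of Proposition~\ref{fundprelift}, replacing the surjective fibration $\phi:N\to Q$ there by the injective embedding of the subnetwork $S$ into $N$, and then constructing a surjective fibration $\tilde\phi:\tilde N\to\tilde S$ going in the \emph{opposite} direction. Concretely, suppose $N$ is represented by $\left(\sigma_i\right)_{i=1}^k$ and let $S$ be a subnetwork of $N$ with cell set $C'\subseteq C$. Since $S$ is a subnetwork, every edge targeting a cell of $C'$ has source in $C'$; hence the restriction $\left.\sigma_i\right|_{C'}$ takes values in $C'$, so $S$ is a homogeneous network with asymmetric inputs represented by $\left(\sigma'_i\right)_{i=1}^k$ with $\sigma'_i=\left.\sigma_i\right|_{C'}$. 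The key observation is then: for any word $\sigma_{i_1}\circ\dots\circ\sigma_{i_m}$, its restriction to $C'$ equals $\sigma'_{i_1}\circ\dots\circ\sigma'_{i_m}$ (again because $C'$ is forward-closed under the $\sigma_i$). So the map $\tilde\phi:\tilde N\to\tilde S$ sending a word $\sigma_{i_1}\circ\dots\circ\sigma_{i_m}\in\tilde C$ to its restriction $\left.\left(\sigma_{i_1}\circ\dots\circ\sigma_{i_m}\right)\right|_{C'}=\sigma'_{i_1}\circ\dots\circ\sigma'_{i_m}\in\tilde C'$ is the candidate.

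The steps in order: (1) verify $S$ is homogeneous with asymmetric inputs and identify $\sigma'_i=\left.\sigma_i\right|_{C'}$, using the forward-closedness of $C'$; (2) define $\tilde\phi(\tilde c)=\left.\tilde c\right|_{C'}$ for $\tilde c\in\tilde C$, and check this is well-defined --- here well-definedness is automatic since $\tilde c$ is literally a function $C\to C$ and restriction is unambiguous, but one should note $\left.\tilde c\right|_{C'}\in\tilde C'$, which follows from the word identity above; in particular $\tilde\phi(Id_N)=Id_S$; (3) show $\tilde\phi$ is surjective: any $\sigma'\in\tilde C'$ is $\sigma'_{i_1}\circ\dots\circ\sigma'_{i_m}$ for some word, and that word's lift $\sigma_{i_1}\circ\dots\circ\sigma_{i_m}\in\tilde C$ maps to it; (4) show $\tilde\phi$ is a network fibration using Proposition~\ref{NRS1453}, i.e.\ $\tilde\phi\circ\tilde\sigma_i=\tilde{\sigma'_i}\circ\tilde\phi$ for all $i$; concretely, for $\tilde c\in\tilde C$, $\tilde\phi(\tilde\sigma_i(\tilde c))=\left.(\sigma_i\circ\tilde c)\right|_{C'}=\left.\sigma_i\right|_{C'}\circ\left.\tilde c\right|_{C'}=\sigma'_i\circ\tilde\phi(\tilde c)=\tilde{\sigma'_i}(\tilde\phi(\tilde c))$, where the middle equality again uses that $\tilde c$ maps $C'$ into $C'$ so that composing-then-restricting equals restricting-then-composing; (5) conclude by Proposition~\ref{quosurjnetfri} that $\tilde S$ is a quotient network of $\tilde N$.

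The main obstacle --- really the only nontrivial point --- is step (4), and specifically the identity $\left.(\sigma_i\circ\tilde c)\right|_{C'}=\left.\sigma_i\right|_{C'}\circ\left.\tilde c\right|_{C'}$. This is exactly where the subnetwork hypothesis is used: it requires $\tilde c(C')\subseteq C'$, which holds because $\tilde c$ is a composition of the $\sigma_j$ and $C'$ is closed under each $\sigma_j$ (any edge into a cell of $C'$ has source in $C'$). Without forward-closedness, $\left.\sigma_i\right|_{C'}\circ\left.\tilde c\right|_{C'}$ might not even be defined. I would state this closure property as an explicit preliminary observation and then the rest of the verification is a routine diagram chase analogous to Proposition~\ref{fundprelift}. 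One should also note the contrast with Proposition~\ref{fundprelift}: there the fibration $\tilde\phi$ went in the same direction as $\phi$ (because quotients push forward), whereas here, although $S\hookrightarrow N$ embeds $S$ into $N$, the induced map on fundamental networks goes $\tilde N\to\tilde S$ --- restriction of functions is contravariant in the cell set --- which is precisely why a subnetwork of $N$ becomes a \emph{quotient} of $\tilde N$ rather than a subnetwork of $\tilde N$.
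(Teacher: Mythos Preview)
Your proposal is correct and essentially identical to the paper's proof: both define $\tilde\phi:\tilde N\to\tilde S$ by restriction $\tilde\phi(\sigma)=\left.\sigma\right|_{C'}$, verify surjectivity via the word identity $\left.(\sigma_{i_1}\circ\dots\circ\sigma_{i_m})\right|_{C'}=\sigma'_{i_1}\circ\dots\circ\sigma'_{i_m}$, check the fibration condition via Proposition~\ref{NRS1453}, and conclude by Proposition~\ref{quosurjnetfri}. Your write-up is in fact a bit more careful than the paper's in making explicit the forward-closedness of $C'$ under the $\sigma_i$ (which is what justifies that restriction lands in $\tilde C'$ and that $\left.(\sigma_i\circ\tilde c)\right|_{C'}=\left.\sigma_i\right|_{C'}\circ\left.\tilde c\right|_{C'}$), a point the paper uses but does not spell out.
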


\begin{proof}
Let $N$  be a homogeneous network with asymmetric inputs and $S$ a subnetwork of $N$.
Suppose that $N$ is represented by the functions $\left(\sigma_i\right)_{i=1}^{k}$. 
Then $S$ is represented by the functions $\left(\left.\sigma_i\right|_{S}\right)_{i=1}^{k}$.

Consider the function $\tilde{\phi}:\tilde{N}\rightarrow \tilde{S}$ such that $\tilde{\phi}(\sigma)=\left.\sigma\right|_{S}$. This function is surjective, because if $\gamma=\left.\sigma_{i_1}\right|_{S}\circ\dots\circ\left.\sigma_{i_m}\right|_{S}$, then $\gamma=\left.(\sigma_{i_1}\circ\dots\circ\sigma_{i_m})\right|_{S}$.
For every cell $\sigma$ in $\tilde{N}$, we have that
$$\tilde{\phi}\circ \tilde{\sigma}_i(\sigma)=\tilde{\phi}(\sigma_i\circ\sigma)= \left.(\sigma_i\circ\sigma)\right|_{S}=\left.\sigma_i\right|_{S}\circ\left.\sigma\right|_{S}=\widetilde{\left.\sigma_i\right|_{S}}\circ \tilde{\phi}(\sigma).$$ 
Hence $\tilde{\phi}$ is a surjective network fibration. By proposition~\ref{quosurjnetfri}, it follows that $\tilde{S}$ is a quotient network of $\tilde{N}$.
\end{proof}

\subsection{Fundamental networks and lifts}\label{fundliftsec}

In this section, we give a characterization of the fundamental networks that are lifts of the original network, in terms of network connectivity, using the results in \cite{NRS14}.
We point out that Nijholt et al. in \cite{NRS14} consider that $N'$ is a quotient network of $N$ simply if there is a network fibration from $N$ to $N'$ which need not be surjective.
We also give a necessary condition for a network to be a lift of its fundamental network.

\begin{prop}\label{liftfund}
Let $N$ be a homogeneous network with asymmetric inputs and $\tilde{N}$ its fundamental network. Then $\tilde{N}$ is a lift of $N$ if and only if $N$ is backward connected.
\end{prop}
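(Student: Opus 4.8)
The statement to prove is: $\tilde N$ is a lift of $N$ if and only if $N$ is backward connected. By definition, $\tilde N$ is a lift of $N$ precisely when $N$ is a quotient network of $\tilde N$, which by Proposition~\ref{quosurjnetfri} happens if and only if there is a \emph{surjective} network fibration from $\tilde N$ to $N$. By Theorem~\ref{NRS1464}, every network fibration from $\tilde N$ to $N$ is of the form $\varphi_c$ for some cell $c\in C$, and the image of $\varphi_c$ is the input network $N_{(c)}$. So the plan is to reduce the whole statement to the observation that $\varphi_c$ is surjective if and only if $N_{(c)} = N$, and then to recognize $N_{(c)}=N$ as exactly the assertion that $N$ is backward connected for $c$.

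\textbf{The two directions.} For the forward direction, suppose $\tilde N$ is a lift of $N$. Then there is a surjective network fibration $\psi:\tilde N\to N$; by Theorem~\ref{NRS1464} we have $\psi=\varphi_c$ for some $c\in C$, and its image is $N_{(c)}$. Surjectivity forces $N_{(c)}=N$ (as networks, i.e.\ they have the same cell set and edge set), which by the definition of $N_{(c)}$ means that for every cell $c'\ne c$ there is a directed path from $c'$ to $c$; hence $N$ is backward connected for $c$, so $N$ is backward connected. For the converse, suppose $N$ is backward connected, say for the cell $c$. Then $N_{(c)}=N$, and by Theorem~\ref{NRS1464} the network fibration $\varphi_c:\tilde N\to N$ has image $N_{(c)}=N$, so $\varphi_c$ is surjective. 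By Proposition~\ref{quosurjnetfri}, $N$ is then a quotient network of $\tilde N$, i.e.\ $\tilde N$ is a lift of $N$.

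\textbf{Where the work is.} Almost nothing here is a genuine obstacle once Theorem~\ref{NRS1464} is in hand; the one point that deserves a sentence of care is the identification ``$\varphi_c$ surjective $\iff N_{(c)}=N$''. Since $\varphi_c(\tilde c)=\tilde c(c)$, its image as a set of cells is $\{\,\tilde c(c) : \tilde c\in\tilde C\,\}$, and by the Remark relating path-composition to compositions of the $\sigma_i$, this set is exactly $C_{(c)}$, the cells reachable by a directed path to $c$ (together with $c$ itself, via $Id$). So $\varphi_c$ is surjective on cells iff $C_{(c)}=C$; and since a network fibration onto a subnetwork automatically carries all input edges of covered cells, surjectivity on cells already forces the edge sets to agree, giving $N_{(c)}=N$. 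I would state this identification explicitly (it is essentially ``image of $\varphi_c$ is $N_{(c)}$'', which Theorem~\ref{NRS1464} already records) and then the equivalence with backward connectivity for $c$ is immediate from the definition. Thus the proof is a short chain: lift $\iff$ surjective fibration $\tilde N\to N$ (Prop.~\ref{quosurjnetfri}) $\iff$ some $\varphi_c$ surjective (Thm.~\ref{NRS1464}) $\iff$ $N_{(c)}=N$ for some $c$ $\iff$ $N$ backward connected.
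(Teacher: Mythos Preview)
Your proof is correct and follows essentially the same approach as the paper's own proof: both reduce the statement, via Proposition~\ref{quosurjnetfri} and Theorem~\ref{NRS1464}, to the observation that $\varphi_c$ is surjective if and only if $N_{(c)}=N$, i.e.\ if and only if $N$ is backward connected for $c$. Your write-up is somewhat more detailed in justifying the equivalence ``$\varphi_c$ surjective $\iff N_{(c)}=N$'', but the logical structure is identical.
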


\begin{proof}
Let $N$ be a homogeneous network with asymmetric inputs and $\tilde{N}$ its fundamental network.
By proposition~\ref{quosurjnetfri}, the fundamental network $\tilde{N}$ is a lift of $N$ if and only if there is a surjective network fibration from $\tilde{N}$ to $N$.

Using theorem~\ref{NRS1464} and the network fibrations defined there, we have that every network fibration from $\tilde{N}$ to $N$ is equal to $\varphi_c$, for some cell $c$. Moreover, $\varphi_c$ is surjective if and only if $N_{(c)}=N$.
Note that $N_{(c)}=N$ if and only if $N$ is backward connected for $c$.
Hence $\tilde{N}$ is a lift of $N$ if and only if  $N$ is backward connected.
\end{proof}

It follows from proposition~\ref{subfundquo} and proposition~\ref{liftfund} that a fundamental network is a lift of every backward connected subnetwork of the original network. 

\begin{coro}\label{backsub}
Let $N$ be a homogenous network with asymmetric inputs, $\tilde{N}$ its fundamental network and $B$ a backward connected subnetwork of $N$. Then $\tilde{N}$ is a lift of $B$.
\end{coro}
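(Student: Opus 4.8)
The plan is to chain together the two results that have just been established, using the fact that the fundamental network is invariant under the fundamental network construction. First I would recall the setup: $B$ is a backward connected subnetwork of $N$. Applying Proposition~\ref{subfundquo} to the pair $(N,B)$ gives immediately that $\tilde{B}$ is a quotient network of $\tilde{N}$; equivalently, $\tilde{N}$ is a lift of $\tilde{B}$.

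Next I would invoke Proposition~\ref{liftfund}, applied this time with $B$ in the role of the ambient homogeneous network with asymmetric inputs. Since $B$ is backward connected by hypothesis, Proposition~\ref{liftfund} tells us that $\tilde{B}$ is a lift of $B$, i.e.\ $B$ is a quotient network of $\tilde{B}$.

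Finally I would combine these. We have $B$ a quotient of $\tilde{B}$ and $\tilde{B}$ a quotient of $\tilde{N}$; since the quotient relation is transitive (a composition of surjective network fibrations is a surjective network fibration, via Proposition~\ref{quosurjnetfri}), $B$ is a quotient network of $\tilde{N}$, which is exactly the statement that $\tilde{N}$ is a lift of $B$.

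There is really no hard obstacle here, since everything is a routine assembly of the preceding two propositions plus transitivity of the quotient relation; the only point that deserves a word of care is making sure that Proposition~\ref{liftfund} is legitimately applicable to $B$ rather than to $N$ — that is, that $B$, being a subnetwork of a homogeneous network with asymmetric inputs, is itself a homogeneous network with asymmetric inputs (which holds because each cell of $B$ retains exactly its incoming edges from $N$, one of each type) and that $\tilde{B}$ in the two invocations refers to the same object. One could alternatively phrase the whole argument at the level of network fibrations: compose the surjective fibration $\tilde{N}\to\tilde{B}$ coming from the proof of Proposition~\ref{subfundquo} with the surjective fibration $\tilde{B}\to B$ of the form $\varphi^{B}_{c}$ (for a cell $c$ for which $B$ is backward connected) supplied by Theorem~\ref{NRS1464} and Proposition~\ref{liftfund}.
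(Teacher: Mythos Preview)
Your proof is correct and follows exactly the approach the paper intends: the corollary is stated immediately after the sentence ``It follows from proposition~\ref{subfundquo} and proposition~\ref{liftfund} that a fundamental network is a lift of every backward connected subnetwork of the original network,'' and your argument simply spells out this chain (plus the routine transitivity of the quotient relation) in more detail than the paper does.
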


%

In the next result, we give a necessary condition for a network to be a lift of its fundamental network.

\begin{prop}\label{surjtrans}
Let $N$ be a homogenous network with asymmetric inputs and $\tilde{N}$ its fundamental network.
If $N$ is a lift of $\tilde{N}$, then $N$ is transitive.
\end{prop}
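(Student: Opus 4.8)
The plan is to use Proposition~\ref{quosurjnetfri} to turn the hypothesis into a surjective network fibration and then manufacture, for each cell of $N$, a self-fibration of $N$ witnessing transitivity. So suppose $N$ is a lift of $\tilde N$, i.e.\ (by Proposition~\ref{quosurjnetfri}) there is a surjective network fibration $\psi:N\rightarrow\tilde N$. The natural candidate for the distinguished cell is the cell $c_0\in C$ with $\psi(c_0)=Id$; such a cell exists by surjectivity. I would then show $N$ is transitive for $c_0$.

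The key step is the construction of the self-fibrations. Fix an arbitrary cell $d\in C$. By Theorem~\ref{NRS1464} there is a network fibration $\varphi_d:\tilde N\rightarrow N$ given by $\varphi_d(\tilde c)=\tilde c(d)$, and moreover $\varphi_d(Id)=Id(d)=d$. Compose to obtain $\phi_d:=\varphi_d\circ\psi:N\rightarrow N$, which is a network fibration (composition of network fibrations is a network fibration — this is immediate from Proposition~\ref{NRS1453}, since $\psi\circ\sigma_i=\tilde\sigma_i\circ\psi$ and $\varphi_d\circ\tilde\sigma_i=\sigma_i\circ\varphi_d$ give $\phi_d\circ\sigma_i=\sigma_i\circ\phi_d$). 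Then
\[
\phi_d(c_0)=\varphi_d\bigl(\psi(c_0)\bigr)=\varphi_d(Id)=d.
\]
Since $d$ was arbitrary, $N$ is transitive for $c_0$, hence transitive.

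The main obstacle — and really the only thing to be careful about — is ensuring that the cell $c_0$ with $\psi(c_0)=Id$ is well-defined for the argument: surjectivity of $\psi$ gives existence, and although $c_0$ need not be unique, any choice works since the construction of $\phi_d$ only uses $\psi(c_0)=Id$. One should also double-check that $\varphi_d$ from Theorem~\ref{NRS1464} genuinely lands in $N$ (not merely in the input network $N_{(d)}$), but that is fine: the theorem states $\varphi_d:\tilde N\rightarrow N$ is a network fibration whose image is $N_{(d)}$, and we only need it as a map into $N$. Everything else is the routine verification that composites of network fibrations are network fibrations, which Proposition~\ref{NRS1453} reduces to a one-line commutation check.
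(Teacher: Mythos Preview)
Your proof is correct and follows essentially the same approach as the paper: obtain a surjective fibration $\psi:N\to\tilde N$ via Proposition~\ref{quosurjnetfri}, pick $c$ with $\psi(c)=Id$, and compose with the fibrations $\varphi_d:\tilde N\to N$ from Theorem~\ref{NRS1464} to get $\varphi_d\circ\psi(c)=d$. The paper's version is slightly terser (it does not spell out the commutation check for composites), but the argument is the same.
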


\begin{proof}
Let $N$ be a homogenous network with asymmetric inputs and $\tilde{N}$ its fundamental network. Suppose that $N$ is a lift of $\tilde{N}$.
By proposition~\ref{quosurjnetfri}, there exists a surjective network fibration $\psi:N\rightarrow \tilde{N}$. Let $c$ be a cell in $N$ such $\psi(c)=Id_N$.
Consider the network fibrations, given in theorem~\ref{NRS1464}, $\varphi_d:\tilde{N}\rightarrow N$, for every cell $d$ in $N$. Note that $\varphi_d\circ \psi(c)=\varphi_d(Id_N)=d$, for every cell $d$ in $N$. Hence $N$ is transitive for $c$.
\end{proof}

\subsection{Fundamental networks and subnetworks}\label{fundsubsec}

In this section, we give a necessary and sufficient condition for a network to be a subnetwork of its fundamental network. Moreover, we give a sufficient condition for a fundamental network to be a subnetwork of the original network. We start with two examples.

\begin{exe}
(i) The network in figure~\ref{fig:hnai3} is not a subnetwork of its fundamental network, figure~\ref{fig:fundhnai3}. (ii) The network in figure~\ref{fig:subnet} is a subnetwork of its fundamental network, figure~\ref{fig:fundsubnet}.
\end{exe}

\begin{figure}[h]
\begin{subfigure}[t]{0.45\textwidth}
\centering
\begin{tikzpicture}
\node (n1) [draw, shape=circle] {1};
\node (n2) [draw, shape=circle] [right=of n1] {2};

\draw[-latex, thick] (n2) to  (n1);
\draw[-latex, thick] (n1) to  (n2);

\draw[-latex, dashed] (n1) to [loop above] (n1);
\draw[-latex, dashed] (n1) to [bend right] (n2);
\end{tikzpicture}
\caption{}
\label{fig:subnet}
\end{subfigure}
\begin{subfigure}[t]{0.45\textwidth}
\centering
\begin{tikzpicture}
\node (n1) [draw, shape=circle,label=center:$\sigma_2$] {\phantom{0}};
\node (n2) [draw, shape=circle,label=center:$\gamma$] [right=of n1] {\phantom{0}};
\node (n3) [draw, shape=circle,label=center:$Id$] [below=of n1]{\phantom{0}};
\node (n4) [draw, shape=circle,label=center:$\sigma_1$] [right=of n3] {\phantom{0}};

\draw[-latex, dashed] (n1) to [loop left] (n1);
\draw[-latex, dashed] (n1) to [bend right] (n2);
\draw[-latex, dashed] (n1) to (n3);
\draw[-latex, dashed] (n1) to (n4);

\draw[-latex, thick] (n2) to (n1);
\draw[-latex, thick] (n1) to (n2);
\draw[-latex, thick] (n4) to (n3);
\draw[-latex, thick] (n3) to (n4);
\end{tikzpicture}
\caption{$\gamma=\sigma_1\circ \sigma_2$}
\label{fig:fundsubnet}
\end{subfigure}
\caption{\subref{fig:subnet} Homogeneous network with asymmetric inputs represented by $\sigma_1=[2\;1]$ and $\sigma_2=[1\;1]$; \subref{fig:hnai2} Fundamental network of the network \subref{fig:subnet}.}
\label{fig::subnet}
\end{figure}
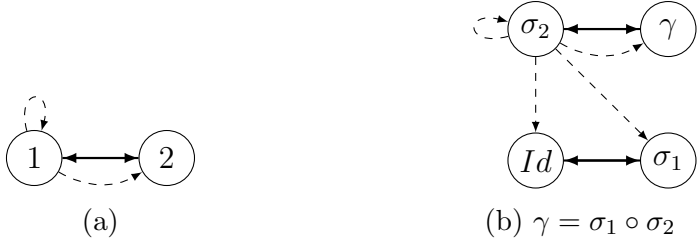

In the next proposition, we give necessary and sufficient conditions for the existence of a network fibration from a network to its fundamental network.

\begin{prop}\label{graphfricond}
Let $N$ be a  homogeneous network with asymmetric inputs and $\tilde{N}$ its fundamental network with sets of cells $C$ and $\tilde{C}$, respectively. Suppose that $N$  is backward connected for $c\in C$.\\
(i) If $\varphi:N\rightarrow\tilde{N}$ is a network fibration, then $\sigma'\circ \varphi(c) =\sigma''\circ \varphi(c)$, for every $\sigma',\sigma''\in \tilde{C}$ such that $\sigma'(c)=\sigma''(c)$. \\
(ii) If there is $ \sigma\in \tilde{C}$ such that $\sigma'\circ \sigma =\sigma''\circ \sigma$,  for every $\sigma',\sigma''\in \tilde{C}$ such that $\sigma'(c)=\sigma''(c)$, then there is a network fibration $\varphi:N\rightarrow\tilde{N}$ such that $\varphi(c)=\sigma$.
\end{prop}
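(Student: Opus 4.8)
The plan is to exploit Proposition~\ref{NRS1453}, which reduces the existence of a network fibration $\varphi:N\to\tilde N$ to the intertwining relations $\left.\varphi\right|_C\circ\sigma_i=\tilde\sigma_i\circ\left.\varphi\right|_C$, together with Proposition~\ref{netfriback}, which says that such a $\varphi$ is completely pinned down by the single value $\varphi(c)$ once $N$ is backward connected for $c$. Writing $\tilde\varphi$ for the restriction $\left.\varphi\right|_C$, the key observation is that for any word $\sigma_{i_1}\circ\dots\circ\sigma_{i_m}$ one must have, by iterating the intertwining relation, $\tilde\varphi(\sigma_{i_1}\circ\dots\circ\sigma_{i_m}(c))=\sigma_{i_1}\circ\dots\circ\sigma_{i_m}\circ\tilde\varphi(c)$; since $N$ is backward connected for $c$ every cell of $N$ has the form $\sigma_{i_1}\circ\dots\circ\sigma_{i_m}(c)$ (with the empty word giving $c$ itself), so $\varphi$ exists precisely when this prescription is well defined. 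In semigroup language: setting $\sigma:=\varphi(c)\in\tilde C$, the assignment $\tau(c)\mapsto\tau\circ\sigma$ (for $\tau\in\tilde C$) must be a well-defined function $C\to\tilde C$.

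For part (i), I would argue as follows. Suppose $\varphi:N\to\tilde N$ is a network fibration and set $\sigma:=\varphi(c)$. Given $\sigma',\sigma''\in\tilde C$, write each as a word in the generators, $\sigma'=\sigma_{i_1}\circ\dots\circ\sigma_{i_m}$ and $\sigma''=\sigma_{j_1}\circ\dots\circ\sigma_{j_l}$. The iterated intertwining identity gives
\[
\varphi\big(\sigma'(c)\big)=\sigma'\circ\sigma,\qquad \varphi\big(\sigma''(c)\big)=\sigma''\circ\sigma .
\]
(These are valid equalities in $\tilde C$ regardless of which word represents $\sigma'$ or $\sigma''$, because the left-hand sides depend only on the cell $\sigma'(c)$, resp.\ $\sigma''(c)$, of $N$.) Hence if $\sigma'(c)=\sigma''(c)$ then the left-hand sides coincide, and therefore $\sigma'\circ\sigma=\sigma''\circ\sigma$, i.e.\ $\sigma'\circ\varphi(c)=\sigma''\circ\varphi(c)$, which is the claim. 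The one point to state carefully is the iterated identity $\varphi(\sigma_{i_1}\circ\dots\circ\sigma_{i_m}(c))=\sigma_{i_1}\circ\dots\circ\sigma_{i_m}\circ\varphi(c)$; this is a short induction on $m$ using Proposition~\ref{NRS1453} at each step, with $\varphi(c)=\varphi(Id(c))$ furnishing the base case.

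For part (ii), suppose $\sigma\in\tilde C$ satisfies the stated cancellation-type hypothesis. I would \emph{define} $\varphi$ on cells by $\varphi(d):=\tau\circ\sigma$ whenever $d=\tau(c)$ for some $\tau\in\tilde C$; such a $\tau$ exists for every $d\in C$ because $N$ is backward connected for $c$ (taking $\tau=Id$ when $d=c$). This is well defined: if $\tau(c)=\tau'(c)$ with $\tau,\tau'\in\tilde C$, the hypothesis applied to $\sigma',\sigma''=\tau,\tau'$ gives $\tau\circ\sigma=\tau'\circ\sigma$. Then $\varphi(c)=Id\circ\sigma=\sigma$, and to see $\varphi$ is a network fibration I verify the intertwining relation of Proposition~\ref{NRS1453}: for $d=\tau(c)$ we have $\sigma_i\circ d=(\sigma_i\circ\tau)(c)$, so $\varphi(\sigma_i(d))=(\sigma_i\circ\tau)\circ\sigma=\sigma_i\circ(\tau\circ\sigma)=\tilde\sigma_i(\varphi(d))$, using the definition of $\varphi$ with the word $\sigma_i\circ\tau\in\tilde C$ and the definition $\tilde\sigma_i(\tilde c)=\sigma_i\circ\tilde c$. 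This holds for every cell $d$ and every $i$, so $\varphi$ is a network fibration with $\varphi(c)=\sigma$, as required.

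The only subtle point — and the one I would be most careful about — is the well-definedness bookkeeping: a cell of $N$ may be written as $\tau(c)$ for many different semigroup elements $\tau\in\tilde C$, and the hypothesis in (ii) (respectively the conclusion in (i)) is exactly what is needed to make the value $\tau\circ\sigma$ independent of this choice. Everything else is a direct application of Propositions~\ref{NRS1453} and~\ref{netfriback} together with the description of backward-connected cells by compositions of representative functions in the Remark following the definition of directed path. No genuinely hard estimate or construction is involved; the content is in recognizing that "$\varphi$ is a fibration into $\tilde N$" translates, via backward connectivity, into a single congruence-type condition on $\varphi(c)$.
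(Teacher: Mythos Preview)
Your proof is correct and follows essentially the same approach as the paper's: both use Proposition~\ref{NRS1453} to obtain the identity $\varphi\circ\sigma=\sigma\circ\varphi$ for all $\sigma\in\tilde C$ (the paper states this directly, you phrase it as an induction on word length), and for part~(ii) both define $\varphi(\tau(c)):=\tau\circ\sigma$, check well-definedness via the hypothesis, and verify the intertwining relation. The only cosmetic difference is that the paper's argument is slightly more compact, and your mention of Proposition~\ref{netfriback} in the plan is not actually needed (nor used by you or the paper) for the existence argument in~(ii).
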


\begin{proof}
Let $N$ be a  homogeneous network with asymmetric inputs and $\tilde{N}$ its fundamental network with sets of cells $C$ and $\tilde{C}$, and represented by $\left(\sigma_i\right)_{i=1}^{k}$ and $\left(\tilde{\sigma_i}\right)_{i=1}^{k}$, respectively. Suppose that $N$  is backward connected for $c\in C$.

In order to prove $(i)$, suppose that $\varphi:N\rightarrow\tilde{N}$ is a network fibration. By proposition~\ref{NRS1453}, $\varphi\circ \sigma_i=\tilde{\sigma_i}\circ \varphi=\sigma_i\circ \varphi$, for every $1\leq i\leq k$. So for every $\sigma\in \tilde{C}$, we have that $$\varphi\circ \sigma=\sigma\circ \varphi.$$
Let $\sigma',\sigma''\in \tilde{C}$ such that $\sigma'(c)=\sigma''(c)$. Then
$$\sigma'\circ \varphi(c)=\varphi\circ \sigma'(c)=\varphi\circ \sigma''(c)=\sigma''\circ \varphi(c).$$

To prove $(ii)$, suppose that there is $ \sigma\in \tilde{C}$ such that $\sigma'\circ \sigma =\sigma''\circ \sigma$,  for every $\sigma',\sigma''\in \tilde{C}$ such that $\sigma'(c)=\sigma''(c)$.
Define $\varphi:N\rightarrow \tilde{N}$ given by $\varphi(c)=\sigma$ and $\varphi(c')=\sigma'\circ\sigma$, where $c'=\sigma'(c)$. This function is defined for every cell in $N$, because $N$ is backward connected for $c$. And it is well defined, because if $c'=\sigma'(c)=\sigma''(c)$, then $\varphi(c')=\sigma'\circ \sigma=\sigma''\circ \sigma$.

We just need to see that $\varphi$ is a network fibration. Using proposition~\ref{NRS1453}, we check that $\varphi\circ \sigma_i=\tilde{\sigma_i}\circ \varphi$, for every $1\leq i \leq k$. Because $N$ is backward connected, for every cell $d$ of $N$, there is $\sigma'\in \tilde{C}$ such that $d=\sigma'(c)$ and
$$\varphi\circ \sigma_i(d)=\varphi(\sigma_i\circ\sigma'(c))=\sigma_i\circ\sigma'\circ \sigma=\tilde{\sigma_i}(\sigma'\circ \sigma)=
\tilde{\sigma_i}\circ \varphi(\sigma'(c))=\tilde{\sigma_i}\circ \varphi(d),$$
for every $1\leq i \leq k$. Hence $\varphi\circ \sigma_i=\tilde{\sigma_i}\circ \varphi$ and $\varphi$ is a network fibration.
\end{proof}

Recalling proposition~\ref{subinjnetfri} and restricting the network fibration of proposition~\ref{graphfricond} to an injective network fibration, we obtain the characterization of the networks that are subnetworks of its fundamental network.

\begin{coro} Let $N$ be a homogeneous network with asymmetric inputs backward connected for a cell $c$ and $\tilde{N}$ its fundamental network. Then  $N$ is a subnetwork of $\tilde{N}$ if and only if there is $\sigma\in \tilde{C}$ such that for every $\sigma',\sigma''\in \tilde{C}$, the following condition is satisfied: $$\sigma'\circ \sigma =\sigma''\circ \sigma\Leftrightarrow\sigma'(c)=\sigma''(c).$$
\end{coro}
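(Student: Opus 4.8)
The plan is to derive this corollary directly from Proposition~\ref{graphfricond} together with Proposition~\ref{subinjnetfri}, by upgrading the network fibration produced in part~(ii) of Proposition~\ref{graphfricond} to an injective one, and conversely extracting the full equivalence from the injective fibration guaranteed by the subnetwork hypothesis. Throughout, write $\left(\sigma_i\right)_{i=1}^k$ for the representative functions of $N$ and recall that, since $N$ is backward connected for $c$, every cell of $N$ is of the form $\sigma'(c)$ for some $\sigma'\in\tilde{C}$, and a network fibration $\varphi:N\to\tilde N$ is determined by $\varphi(c)$ via $\varphi(\sigma'(c))=\sigma'\circ\varphi(c)$ (Proposition~\ref{netfriback} and the identity $\varphi\circ\sigma=\sigma\circ\varphi$ established in the proof of Proposition~\ref{graphfricond}).

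For the ``if'' direction, suppose there is $\sigma\in\tilde C$ with $\sigma'\circ\sigma=\sigma''\circ\sigma \Leftrightarrow \sigma'(c)=\sigma''(c)$ for all $\sigma',\sigma''\in\tilde C$. The implication $\sigma'(c)=\sigma''(c)\Rightarrow\sigma'\circ\sigma=\sigma''\circ\sigma$ is exactly the hypothesis of Proposition~\ref{graphfricond}(ii), so there is a network fibration $\varphi:N\to\tilde N$ with $\varphi(c)=\sigma$ and $\varphi(\sigma'(c))=\sigma'\circ\sigma$. I claim $\varphi$ is injective: if $\varphi(\sigma'(c))=\varphi(\sigma''(c))$ then $\sigma'\circ\sigma=\sigma''\circ\sigma$, and the reverse implication in the assumed equivalence gives $\sigma'(c)=\sigma''(c)$, i.e.\ the two cells of $N$ coincide. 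By Proposition~\ref{subinjnetfri}, $N$ is a subnetwork of $\tilde N$.

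For the ``only if'' direction, suppose $N$ is a subnetwork of $\tilde N$. By Proposition~\ref{subinjnetfri} there is an injective network fibration $\varphi:N\to\tilde N$; set $\sigma:=\varphi(c)\in\tilde C$. By Proposition~\ref{graphfricond}(i), $\sigma'(c)=\sigma''(c)$ implies $\sigma'\circ\sigma=\sigma''\circ\sigma$ — indeed, the proof of that proposition shows $\varphi\circ\sigma'=\sigma'\circ\varphi$ on all of $N$, so $\sigma'\circ\sigma=\sigma'\circ\varphi(c)=\varphi(\sigma'(c))=\varphi(\sigma''(c))=\sigma''\circ\varphi(c)=\sigma''\circ\sigma$. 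Conversely, if $\sigma'\circ\sigma=\sigma''\circ\sigma$, then $\varphi(\sigma'(c))=\sigma'\circ\varphi(c)=\sigma'\circ\sigma=\sigma''\circ\sigma=\varphi(\sigma''(c))$, and injectivity of $\varphi$ forces $\sigma'(c)=\sigma''(c)$. Combining the two implications yields the desired equivalence for this choice of $\sigma$.

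I do not expect a genuine obstacle here, since the corollary is essentially a repackaging of two already-proved statements; the only point requiring a little care is verifying that the fibration $\varphi$ of Proposition~\ref{graphfricond}(ii) is injective precisely when the one-directional hypothesis of (ii) is strengthened to the biconditional, and that the value $\sigma=\varphi(c)$ extracted in the converse satisfies \emph{both} implications — the nontrivial one ($\sigma'\circ\sigma=\sigma''\circ\sigma\Rightarrow\sigma'(c)=\sigma''(c)$) being exactly where injectivity of $\varphi$ is used. One should also note in passing that the backward-connectedness hypothesis is what makes ``cell of $N$'' and ``element $\sigma'(c)$ with $\sigma'\in\tilde C$'' interchangeable, so that the condition on $\tilde C$ faithfully encodes the structure of $N$.
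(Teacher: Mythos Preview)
Your proof is correct and follows exactly the route the paper indicates: it combines Proposition~\ref{subinjnetfri} (subnetwork $\Leftrightarrow$ injective fibration) with Proposition~\ref{graphfricond}, observing that strengthening the one-sided implication in part~(ii) to the biconditional is precisely what makes the resulting fibration injective, and conversely that injectivity of $\varphi$ yields the missing implication in part~(i). The paper itself states only this one-line justification, so you have faithfully unpacked the intended argument.
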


\begin{exe}
Consider the network in figure~\ref{fig:subnet} represented by $\sigma_1=[2\;1]$ and $\sigma_2=[1\;1]$. The network is backward connected for the cell $1$ and $\sigma'\circ \sigma_2 =\sigma''\circ \sigma_2$ if and only if $\sigma'(1)=\sigma''(1)$. By the previous corollary, the network is a subnetwork of its fundamental network.
\end{exe}

We show now that if a network is transitive, then its fundamental network is a subnetwork of the network. This result will be used in the following section to characterize fundamental networks.

\begin{prop}\label{transinj}
Let $N$ be a homogenous network with asymmetric inputs and $\tilde{N}$ its fundamental network.
If $N$ is transitive, then $\tilde{N}$ is a subnetwork of $N$.
\end{prop}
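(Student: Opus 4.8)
The plan is to produce an explicit injective network fibration from $\tilde{N}$ to $N$ and then invoke proposition~\ref{subinjnetfri}. The natural candidate is already in hand: let $c$ be a cell for which $N$ is transitive, and take the network fibration $\varphi_c:\tilde{N}\rightarrow N$, $\varphi_c(\tilde{c})=\tilde{c}(c)$, supplied by theorem~\ref{NRS1464}. So the whole content of the proof is reduced to showing that \emph{this particular} $\varphi_c$ is injective when $N$ is transitive for $c$.

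To do this, suppose $\tilde{c},\tilde{d}\in\tilde{C}$ satisfy $\varphi_c(\tilde{c})=\varphi_c(\tilde{d})$, i.e.\ $\tilde{c}(c)=\tilde{d}(c)$; the goal is $\tilde{c}=\tilde{d}$ as self-maps of $C$ (hence as cells of $\tilde{N}$). Fix an arbitrary cell $e\in C$. Transitivity for $c$ gives a network fibration $\phi_e:N\rightarrow N$ with $\phi_e(c)=e$, and proposition~\ref{NRS1453} says $\phi_e\circ\sigma_i=\sigma_i\circ\phi_e$ for every $i=1,\dots,k$. Since every element of $\tilde{C}$ is $Id$ or a composition $\sigma_{i_1}\circ\dots\circ\sigma_{i_m}$, a trivial induction gives $\phi_e\circ\tilde{c}=\tilde{c}\circ\phi_e$ and $\phi_e\circ\tilde{d}=\tilde{d}\circ\phi_e$. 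Therefore
$$\tilde{c}(e)=\tilde{c}(\phi_e(c))=\phi_e(\tilde{c}(c))=\phi_e(\tilde{d}(c))=\tilde{d}(\phi_e(c))=\tilde{d}(e).$$
As $e$ was arbitrary, $\tilde{c}=\tilde{d}$, so $\varphi_c$ is injective.

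Having an injective network fibration $\varphi_c:\tilde{N}\rightarrow N$, proposition~\ref{subinjnetfri} yields that $\tilde{N}$ is a subnetwork of $N$ (indeed $\tilde{N}$ is isomorphic to the image $N_{(c)}$ by theorem~\ref{NRS1464}). I do not expect a genuine obstacle here; the only point requiring care is the observation that transitivity lets one transport the identity $\tilde{c}(c)=\tilde{d}(c)$ to every cell $e$ via the commuting fibration $\phi_e$, together with the routine check that the maps in $\tilde{C}$ — being compositions of the $\sigma_i$ and $Id$ — inherit the commutation with each $\phi_e$ from proposition~\ref{NRS1453}.
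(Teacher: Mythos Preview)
Your proof is correct and follows essentially the same route as the paper: both show that $\varphi_c$ is injective by using the transitivity fibrations $\phi_e$ to transport the equality $\tilde{c}(c)=\tilde{d}(c)$ to every cell, and then invoke proposition~\ref{subinjnetfri}. The only cosmetic difference is that the paper packages the commutation step by first identifying $\phi_e\circ\varphi_c$ with $\varphi_e$ via propositions~\ref{backconfund} and~\ref{netfriback}, whereas you obtain $\phi_e\circ\tilde{c}=\tilde{c}\circ\phi_e$ directly from proposition~\ref{NRS1453} by induction on word length; the resulting computation is identical.
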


\begin{proof}
Let $N$ be a homogenous network with asymmetric inputs and $\tilde{N}$ its fundamental network.
 Denote the network fibrations, given in theorem~\ref{NRS1464}, by $\varphi_d:\tilde{N}\rightarrow N$, for every cell $d$ in $N$. Suppose that $N$ is transitive for a cell $c$.
Then for every cell $d$ in $N$ there is a network fibration $\psi_d:N\rightarrow N$ such that $\psi_d(c)=d$. In order to prove that $\tilde{N}$ is a subnetwork of $N$, we show that $\varphi_c$ is an injective network fibration.

Note that $\psi_d\circ \varphi_c(Id)=\psi_d(c)=d=\varphi_d(Id)$. By propositions~\ref{backconfund} and \ref{netfriback}, we have that $\psi_d\circ \varphi_c=\varphi_d$.
If $\varphi_c(\gamma_1)=\varphi_c(\gamma_2)$, then for every cell $d$ in $N$
$$\gamma_1(d)=\varphi_d(\gamma_1)=\psi_d\circ\varphi_c(\gamma_1)=\psi_d\circ\varphi_c(\gamma_2)=\varphi_d(\gamma_2)=\gamma_2(d),$$
and $\gamma_1=\gamma_2$. Hence $\varphi_c$ is an injective network fibration. By proposition~\ref{subinjnetfri}, $\tilde{N}$ is a subnetwork of $N$.
\end{proof}

From propositions~\ref{surjtrans} and \ref{transinj}, we have the following result.

\begin{coro}
Let $N$ be a homogenous network with asymmetric inputs and $\tilde{N}$ its fundamental network.
If $N$ is a lift of $\tilde{N}$, then $\tilde{N}$ is a subnetwork of $N$.
\end{coro}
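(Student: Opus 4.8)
The plan is to chain the two results that immediately precede this corollary, exactly as the preamble to the statement announces. Suppose $N$ is a homogeneous network with asymmetric inputs, $\tilde{N}$ its fundamental network, and assume $N$ is a lift of $\tilde{N}$, i.e.\ $\tilde{N}$ is a quotient network of $N$. First I would invoke proposition~\ref{surjtrans}: since $N$ is a lift of $\tilde{N}$, the network $N$ is transitive. Then I would feed this into proposition~\ref{transinj}: since $N$ is transitive, its fundamental network $\tilde{N}$ is a subnetwork of $N$. Composing these two implications gives the claim directly.

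Concretely, the proof is essentially one line. I would write: ``Assume $N$ is a lift of $\tilde{N}$. By proposition~\ref{surjtrans}, $N$ is transitive. By proposition~\ref{transinj}, $\tilde{N}$ is a subnetwork of $N$.'' There is no calculation to perform here, and nothing to unpack about the definitions of lift, transitive, or subnetwork, since all of that work has been done in establishing the two cited propositions.

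There is no real obstacle: the only thing to be careful about is that the hypotheses match up verbatim. Proposition~\ref{surjtrans} takes ``$N$ is a lift of $\tilde{N}$'' as hypothesis and concludes ``$N$ is transitive'' (transitive for some cell, in fact for a cell $c$ mapping to $Id_N$ under the surjective fibration $\psi$). Proposition~\ref{transinj} takes ``$N$ is transitive'' as hypothesis and concludes ``$\tilde{N}$ is a subnetwork of $N$''. So the output of the first is literally the input of the second, and the implication chain closes without any intermediate massaging. Hence the corollary is an immediate consequence, and I would simply present it as such.
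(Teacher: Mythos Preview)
Your proposal is correct and matches the paper's approach exactly: the paper states the corollary with the preamble ``From propositions~\ref{surjtrans} and \ref{transinj}, we have the following result'' and gives no further proof. Your one-line chaining of these two propositions is precisely what is intended.
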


\subsection{Networks which are fundamental networks}\label{fundchasec}

Using theorem~\ref{NRS1464} and the results obtained in the previous sections, we can now characterize the networks that are fundamental networks, in terms of transitivity and backward connectedness. 

\begin{teo}
Let $N$ be a homogeneous network with asymmetric inputs. 
The network $N$ is a fundamental network if and only if there is a cell $c$ such that $N$ is backward connected for $c$ and transitive for $c$.
\end{teo}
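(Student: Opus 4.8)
The statement is an "if and only if" between $N$ being a fundamental network and the existence of a cell $c$ that is both backward-connection and transitivity witness. I would prove each direction separately, leaning heavily on Theorem~\ref{NRS1464} (that $\tilde{N}=\tilde{\tilde{N}}$ and the description of all fibrations $\tilde{N}\to N$), Proposition~\ref{backconfund} (fundamental networks are backward connected for $Id$), and Proposition~\ref{transinj} together with Proposition~\ref{liftfund}.

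\textbf{($\Leftarrow$)} Suppose $N$ is backward connected for $c$ and transitive for $c$. By Proposition~\ref{liftfund}, backward connectedness of $N$ gives that $\tilde N$ is a lift of $N$, i.e.\ (Proposition~\ref{quosurjnetfri}) there is a surjective network fibration $\tilde N\to N$; concretely $\varphi_c$ from Theorem~\ref{NRS1464} is surjective since $N_{(c)}=N$. On the other hand, by Proposition~\ref{transinj}, transitivity of $N$ gives that $\tilde N$ is a subnetwork of $N$, and the proof of that proposition shows precisely that the \emph{same} map $\varphi_c:\tilde N\to N$ is injective. So $\varphi_c$ is a bijective network fibration, hence a network isomorphism, and therefore $N$ is isomorphic to $\tilde N=\widetilde{\tilde N}$; since we do not distinguish isomorphic networks, $N$ is a fundamental network (namely its own).

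\textbf{($\Rightarrow$)} Suppose $N$ is a fundamental network, say $N=\tilde M$ for some homogeneous network with asymmetric inputs $M$. By Theorem~\ref{NRS1464}, $\tilde M=\widetilde{\tilde M}=\tilde N$, so $N=\tilde N$: a fundamental network is its own fundamental network. Now take $c=Id$ (the identity cell, which is a cell of $N=\tilde N$). By Proposition~\ref{backconfund}, $N$ is backward connected for $Id$. For transitivity: since $N=\tilde N$, Theorem~\ref{NRS1464} applies with the roles reversed and gives, for each cell $d$ of $N$, a network fibration $\varphi_d:\tilde N\to N$, i.e.\ a network fibration $N\to N$, with $\varphi_d(Id)=Id(d)\cdot\!$— more precisely $\varphi_d(\tilde c)=\tilde c(d)$, so $\varphi_d(Id)=d$. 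These are exactly the maps $\phi_d:N\to N$ with $\phi_d(c)=d$ required by the definition of transitivity for $c=Id$. Hence $N$ is transitive for $Id$, and $Id$ is the desired cell.

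\textbf{Main obstacle.} The genuine content is all in the already-established machinery, so the only care needed is bookkeeping: making sure that in the ($\Leftarrow$) direction the surjective fibration from Proposition~\ref{liftfund} and the injective fibration from Proposition~\ref{transinj} can be taken to be \emph{the same} map $\varphi_c$ (so that one concludes an isomorphism rather than merely "$\tilde N$ is both a lift and a subnetwork of $N$"), and, in the ($\Rightarrow$) direction, correctly instantiating Theorem~\ref{NRS1464} for the network $\tilde N$ in place of $N$, so that the fibrations $\tilde{\tilde N}=\tilde N\to \tilde N$ really are endo-fibrations of $N$ hitting every cell from $Id$. Both points are immediate from the cited results once stated carefully; there is no hard estimate or construction.
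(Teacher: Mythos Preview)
Your proof is correct and follows essentially the same route as the paper: for ($\Leftarrow$) you show $\varphi_c:\tilde N\to N$ is both surjective (via Proposition~\ref{liftfund}) and injective (via the proof of Proposition~\ref{transinj}), hence an isomorphism; for ($\Rightarrow$) you reduce to $N=\tilde N$ via Theorem~\ref{NRS1464} and then exhibit backward connectedness and transitivity at the cell $Id$ using Proposition~\ref{backconfund} and the fibrations $\varphi_d$ of Theorem~\ref{NRS1464}. The paper does exactly the same, phrasing the ($\Rightarrow$) direction as properties of $\tilde N$ transported to $N$ through a bijective fibration $\psi:\tilde N\to N$ (so the witnessing cell is $\psi(Id)$ rather than literally $Id$), which is just your identification made explicit.
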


\begin{proof}
Let $N$ be a homogeneous network with asymmetric inputs. 

Suppose that $N$ is a fundamental network. Then $N$ is equal to $\tilde{N}$ and there is a bijective network fibration $\psi:\tilde{N}\rightarrow N$.
From proposition~\ref{backconfund}, we know that $\tilde{N}$ is backward connected for $Id$.
By theorem~\ref{NRS1464}, we have for every cell $\sigma$ in $\tilde{N}$ that there is a network fibration   $\phi_\sigma:\tilde{N}\rightarrow \tilde{\tilde{N}}=\tilde{N}$ such that $\phi_\sigma(\gamma)=\gamma\circ \sigma$. In particular $\phi_\sigma(Id)=\sigma$, and $\tilde{N}$ is transitive for $Id$.
Hence, $N$ is backward connected for $\psi(Id)$ and it is transitive for $\psi(Id)$.

Suppose that there is a cell $c$ in $N$ such that $N$ is backward connected for $c$ and transitive for $c$. We show that $N$ is equal to $\tilde{N}$ by showing that there is a bijective network fibration from $\tilde{N}$ to $N$. In fact, the network fibration $\varphi_c:\tilde{N}\rightarrow N$, given by theorem~\ref{NRS1464}, is a bijection, since it is surjective by proposition~\ref{liftfund}, and it is injective by proposition~\ref{transinj}.
\end{proof}

\section{Architecture of networks: rings and depth}\label{arcsec}

In this section, we introduce the definition of rings and depth of a homogenous network with asymmetric inputs.
We start by recalling the definitions of connected and strongly connected components. We finish by describing how we can obtain the rings and the depth of a homogenous network with asymmetric inputs using the representative functions of the network.

We say that there is an \emph{undirected path} in a network connecting the sequence of cells $(c_0,c_1,\dots,c_{k-1},c_k)$, if for every $j=1,\dots,k$ there is an edge from $c_{j-1}$ to $c_j$ or an edge from $c_j$ to $c_{j-1}$. A directed path $(c_0,c_1,\dots,c_{m-1},c_k)$ is called a \emph{cycle}, if $c_0=c_k$.

\begin{defi}
Let $N$ be a network.
A subset $Y$ of cells in $N$ is called \emph{connected} if for every two cells in $Y$ there is an undirected path between them.

We say that $Y$ is a \emph{connected component} of $N$, if $Y$ is a maximal connected subset of cells, in the sense that if $Y\cup \{c\}$ is connected then $c\in Y$.
\end{defi}

We can partition the set of cells of a network in its connected components.

\begin{defi}
Let $N$ be a network with set of cells $C$ and a subset $X\subseteq C$.\\
(i)   The cells $c_1, c_2\in C$ are \emph{strongly connected}, if there is a directed path from $c_1$ to $c_2$ and a directed path from $c_2$ to $c_1$.\\
(ii)  The subset $X$ is \emph{strongly connected}, if every $c_1, c_2\in X$ are strongly connected.\\
(iii) The subset $X$ is a \emph{strongly connected component} of $N$, if $X$ is a maximal strongly connected subset of cells.\\
(iv)  The subset $X$ is a \emph{source} of $N$, if $X$ is a strongly connected component that does not receive any edge with source cell outside $X$, i.e., $s(I(X))\subseteq X$.
\end{defi}


Let $N$ be a homogeneous network with asymmetric inputs and $i$ an edge type of $N$. Denote by $N_i$ the network with the same cells of $N$ and only the edges of type $i$ of $N$. Let $C_i^1,\dots,C_i^m$ be the partition of the set of cells of the network $N_i$ in its connected components. For each connected component, the topology of $N_i$ is the union of a unique source component and feed-forward networks starting at some cell of the source component. See figure~\ref{fig::exarchnet} for an example and see \cite[{proposition 2.3}]{G14} for details.
For each $j=1,\dots,m$, we call the source of $N_i$ in $C_i^j$ a \emph{ring} and denote it by $R_{i}^{j}$. Since the cells in the network $N_i$ have only one input, every cycle in $N_i$ connects every cell in a ring.

\begin{figure}[h]
\centering
\begin{tikzpicture}
\node (n1) [shape=circle,draw] 							 {\phantom{0}};
\node (n2) [shape=circle,draw] [right=of n1] {\phantom{0}};
\node (n3) [shape=circle,draw] [right=of n2] {\phantom{0}};
\node (n4) [shape=circle,draw] [below=of n1] {\phantom{0}};
\node (n5) [shape=circle,draw] [right=of n4] {\phantom{0}};
\node (n6) [shape=circle,draw] [right=of n5] {\phantom{0}};
\node (n7) [shape=circle,draw] [below=of n4] {\phantom{0}};
\node (n8) [shape=circle,draw] [right=of n7] {\phantom{0}};
\node (n9) [shape=circle,draw] [right=of n8] {\phantom{0}};

\draw[-latex, thick] (n2) to  (n1);
\draw[-latex, thick] (n5) to  (n2);
\draw[-latex, thick] (n6) to  (n3);
\draw[-latex, thick] (n7) to  (n4);
\draw[-latex, thick] (n4) to  (n5);
\draw[-latex, thick] (n8) to  (n6);
\draw[-latex, thick] (n5) to  (n7);
\draw[-latex, thick] (n7) to  (n8);
\draw[-latex, thick] (n8) to  (n9);
\end{tikzpicture}
\caption{Union of a ring and feed-forward networks starting at the ring.}
\label{fig::exarchnet}
\end{figure}
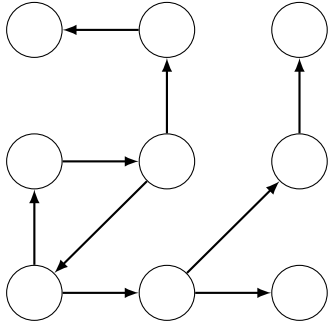

\begin{defi}
Let $N$ be a homogeneous network with asymmetric inputs and $i$ an edge type of $N$. Let $C_i^1,\dots,C_i^m$ be the connected components of $N_i$. For each connected component, $C_i^j$, of $N_i$, we define the \emph{depth} of $N_i$ in $C_i^j$ by
$$\depth^j_i(N):=\max \{ \min\{|(r,c)|:r\in R_{i}^{j}\}: c\in C_i^j\},$$
where $|(r,c)|$ is $0$, if $r=c$, or the number of edges in the shortest directed path in $N_i$ from $r$ to $c$.
And the \emph{depth} of $N_i$ is
\[\depth_i(N):=\max_{j=1,\dots,m}\{\depth^j_i(N)\}.\qedhere\]
\end{defi}

\begin{figure}[h]
\begin{subfigure}[t]{0.32\textwidth}
\centering
\begin{tikzpicture}
\node (n1) [shape=circle,draw]  {1};

\draw[-latex, thick] (n1) to [loop left] (n1);
\end{tikzpicture}
\label{fig:hnai31}
\end{subfigure}
\begin{subfigure}[t]{0.32\textwidth}
\centering
\begin{tikzpicture}
\node (n2) [circle,draw]   {2};
\node (n3) [shape=circle,draw] [right=of n2] {3};
\draw[-latex, thick] (n2) to [loop left] (n2);
\draw[-latex, thick] (n2) to  (n3);
\end{tikzpicture}
\label{fig:hnai32}
\end{subfigure}
\begin{subfigure}[t]{0.32\textwidth}
\centering
\begin{tikzpicture}
\node (n4) [circle,draw]   {4};
\node (n5) [circle,draw]  [right=of n4] {5};
\draw[-latex, thick] (n5) to  (n4);
\draw[-latex, thick] (n4) to  (n5);
\end{tikzpicture}
\label{fig:hnai33}
\end{subfigure}
\caption{The restriction of the network in figure~\ref{fig:hnai3} to the solid edges has three connected components and its depth os $1$.On the left, the ring is $\{1\}$ and the depth is $0$. On the center, the ring is $\{2\}$ and the depth is $1$. On the right, the ring is $\{4,5\}$ and the depth is $0$.}
\label{fig::archhnai}
\end{figure}
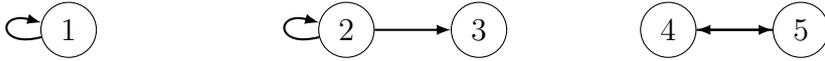

\begin{exe}\label{drexe}
Let $N$ be the network in figure~\ref{fig:hnai3}. Consider the restriction $N_1$ to the solid edges represented in figure~\ref{fig::archhnai}. The network $N_1$ has three connected components, $C^1_1=\{1\}$,  $C^2_1=\{2,3\}$ and $C^3_1=\{4,5\}$. The rings of $N_i$ are: $R^1_1=\{1\}$ in $C^1_1$; $R^2_1=\{2\}$ in $C^2_1$; and $R^3_1=\{4,5\}$ in $C^3_1$. The depth of $N_1$: in $C^1_1$ is $0$; in $C^2_1$ is $1$; and in $C^3_1$ is $0$. So the depth of $N_1$ is $1$.

Let $\tilde{N}$ be the fundamental network of $N$ represented in figure~\ref{fig:fundhnai3}. Consider the restriction $\tilde{N}_1$ to the solid edges. The network $\tilde{N}_1$ has four connected components. Each of the connected components has a ring of size $2$. And the depth of $\tilde{N}_1$ is $1$. Note that the size of any ring in $\tilde{N}_1$ is a multiple of the size of some rings in $N_1$ and the depth of $N_1$ is equal to  the depth of $\tilde{N}_1$. In the next section, we formalize and prove these observations to every networks with asymmetric inputs.
\end{exe}

We describe now the rings and the depth of a network using representative functions. This follows from the following facts: every representative function, $\sigma_i$, is semi-periodic, i.e., there exist $a\geq 0$ and $b> 0$ such that $\sigma_i^a=\sigma_i^{a+b}$; if $\sigma_i^a=\sigma_i^{a+b}$, then there is a cycle for every cell in the range of $\sigma_i^a$; and the distance of a cell $c$ to  a ring $R$ is equal to the minimum $p\geq 0$ such that $\sigma_i^p(c)\in R$.

\begin{lem}\label{dlhnai}
Let $N$ be a homogeneous network with asymmetric inputs represented by the functions $\left(\sigma_i\right)_{i=1}^{k}$ and $C$ the set of cells of $N$. Fix $1\leq i\leq k$ and denote the connected components of $N_i$ by $C^1_i,\dots,C^m_i$, and the corresponding rings by $R_{i}^{1},\dots, R_{i}^{m}$.\\
(i) If 
	$\sigma_i^a=\sigma_i^{a+b}$ for some  $a\geq 0$ and $b>0$, then $R_{i}^{j}=\sigma_i^a(C^j_i)$ for $1\leq j\leq m$.\\
(ii) $$\depth_i(N)=\min\left\{p\in\mathbb{N}_0: \sigma_i^p(C)\subseteq \bigcup_{j=1}^{m}R_{i}^{j}\right\}.$$
\end{lem}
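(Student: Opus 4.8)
The plan is to reduce both parts to elementary facts about the iterates of $\sigma_i$ acting on the finite set $C$. First I would record the structural observation that, since $N$ has asymmetric inputs, every cell has exactly one incoming edge of type $i$, so $N_i$ is the functional graph of $\sigma_i$: an edge goes from $d$ to $c$ in $N_i$ iff $d=\sigma_i(c)$. From this I would extract three facts used throughout: (a) each connected component $C_i^j$ is $\sigma_i$-invariant (if $c\in C_i^j$ then $c$ and $\sigma_i(c)$ are adjacent, so $\sigma_i(c)\in C_i^j$), and $C=\bigsqcup_{j=1}^m C_i^j$; (b) by the description of $N_i$ recalled before the statement (see \cite[{proposition 2.3}]{G14}), $C_i^j$ contains a unique cycle, namely $R_i^j$, and $\sigma_i(R_i^j)=R_i^j$; (c) the cycles of $\sigma_i$ are exactly $R_i^1,\dots,R_i^m$.

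For part (i), I would show $\sigma_i^a(C)=\bigcup_{l=1}^m R_i^l$: if $d=\sigma_i^a(c)$ then $\sigma_i^b(d)=\sigma_i^{a+b}(c)=\sigma_i^a(c)=d$, so $d$ is a periodic point, hence lies on a cycle, hence in some $R_i^l$; conversely, if $d\in R_i^l$ lies on a cycle of length $\ell$, choosing $k$ with $k\ell\ge a$ gives $d=\sigma_i^{k\ell}(d)=\sigma_i^a(\sigma_i^{k\ell-a}(d))\in\sigma_i^a(C)$. Then I would intersect with $C_i^j$: since $\sigma_i^a(C_i^j)\subseteq C_i^j$ by (a), and any $y$ with $\sigma_i^a(y)\in C_i^j$ must itself lie in $C_i^j$, we get $\sigma_i^a(C_i^j)=\sigma_i^a(C)\cap C_i^j=R_i^j$, using that the $R_i^l$ lie in pairwise distinct components.

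For part (ii), the first step is to translate graph distance into iteration: because each cell has a unique incoming type-$i$ edge, the unique directed path of length $p$ in $N_i$ ending at $c$ has initial cell $\sigma_i^p(c)$, so for $c\in C_i^j$ one has $\min\{|(r,c)|:r\in R_i^j\}=\min\{p\in\mathbb{N}_0:\sigma_i^p(c)\in R_i^j\}$, and this set of $p$'s is upward closed since $\sigma_i(R_i^j)=R_i^j$. I would then note that $\{p:\sigma_i^p(C_i^j)\subseteq R_i^j\}$ is likewise upward closed and nonempty (by part (i)), and that $\sigma_i^p(C_i^j)\subseteq R_i^j$ holds exactly when $\sigma_i^p(c)\in R_i^j$ for every $c\in C_i^j$, which yields
$$\depth_i^j(N)=\max_{c\in C_i^j}\min\{p:\sigma_i^p(c)\in R_i^j\}=\min\{p\in\mathbb{N}_0:\sigma_i^p(C_i^j)\subseteq R_i^j\}.$$
Finally, using $C=\bigsqcup_j C_i^j$, the inclusion $\sigma_i^p(C_i^j)\subseteq C_i^j$, and $R_i^l\cap C_i^j=\emptyset$ for $l\ne j$, I would conclude that $\sigma_i^p(C)\subseteq\bigcup_l R_i^l$ iff $\sigma_i^p(C_i^j)\subseteq R_i^j$ for all $j$, hence $\min\{p:\sigma_i^p(C)\subseteq\bigcup_l R_i^l\}=\max_j\depth_i^j(N)=\depth_i(N)$.

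\textbf{Main obstacle.} None of the steps is deep; the work is in the bookkeeping. The point needing the most care is identifying $\sigma_i^a(C)$ with the union of the rings — this is where the asymmetric-input hypothesis and the structure theorem for $N_i$ are genuinely used, to equate ``cycle of $\sigma_i$'' with ``ring''. The second delicate point is the component-wise decomposition: that $\sigma_i$ preserves connected components and that intersecting a $\sigma_i$-image with $C_i^j$ returns exactly $R_i^j$. Translating $\min\{|(r,c)|:r\in R_i^j\}$ into $\min\{p:\sigma_i^p(c)\in R_i^j\}$ also hinges on uniqueness of incoming type-$i$ edges and should be stated explicitly rather than left implicit.
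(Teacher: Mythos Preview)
Your proposal is correct and follows essentially the same line as the paper. The paper does not give a formal proof of this lemma; it only records, in the paragraph immediately preceding the statement, the three facts that drive it (semi-periodicity of $\sigma_i$; that $\sigma_i^a=\sigma_i^{a+b}$ forces every cell in the range of $\sigma_i^a$ to lie on a cycle; and that the distance from $c$ to its ring equals $\min\{p\ge 0:\sigma_i^p(c)\in R\}$), and your argument is exactly a careful unpacking of those facts together with the component-wise bookkeeping.
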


%
%
%

\begin{exe}
Consider example~\ref{drexe}. Let $N$ be the network in figure~\ref{fig:hnai3}, $N_1$ its restriction to the solid edges represented by the function $\sigma_1=[1\;2\;2\;5\;4]$ and $C^1_1=\{1\}$,  $C^2_1=\{2,3\}$ and $C^3_1=\{4,5\}$ the connected components of $N_1$ appearing in figure~\ref{fig::archhnai}. 
Note that $\sigma_1=\sigma^3_1$. By lemma~\ref{dlhnai}, the rings of $N_1$ are $R_1^1=\sigma_1(C^1_1)=\{1\}$, $R_1^2=\sigma_1(C^2_1)=\{2\}$, and $R_1^3=\sigma_1(C^3_1)=\{4,5\}$. Moreover, $\sigma_1^k(C_1^1)\subseteq R_1^1\cup R_1^2\cup R_1^3$ if and only if $k\geq 1$. Hence $\depth_1(N)=1$.
\end{exe}

\section{Architecture of fundamental networks}\label{strufundsec}

We start this section by studying the connectivity of fundamental networks for which the semi-group generated by their representative functions is in fact a group. 

\begin{prop}
Let $N$ be a homogenous network with asymmetric inputs and $\tilde{N}$ its fundamental network. 
\begin{itemize}
\item[(a)] The following statements are equivalent:
\begin{itemize}
\item[(i)] $\tilde{N}$ is strongly connected.
\item[(ii)] $\tilde{C}$ is a group.
\item[(iii)] The representative functions of $N$ are bijections (i.e., permutations).
\end{itemize} 
\item[(b)] If $N$ is connected and $\tilde{N}$ is strongly connected, then $N$ is strongly connected.
\end{itemize}
\end{prop}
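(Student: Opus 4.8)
The plan is to prove the three equivalences in (a) first, treating them cyclically, and then derive (b) from the characterization in (a).

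For (a), I would start with $(ii)\Leftrightarrow(iii)$, which is essentially internal to the semigroup $\tilde C$. The set $\tilde C$ is the semigroup generated by $Id$ and the $\sigma_i$ inside the transformation monoid on the finite set $C$. If each $\sigma_i$ is a bijection of $C$, then $\tilde C$ consists of bijections, hence is a finite subsemigroup of $\mathrm{Sym}(C)$ containing $Id$; a finite cancellative monoid is a group, so $\tilde C$ is a group. Conversely, if $\tilde C$ is a group, then each generator $\sigma_i$ has a two-sided inverse in $\tilde C$, and since composition in $\tilde C$ is composition of functions on $C$, having a functional inverse forces $\sigma_i$ to be a bijection of $C$. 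Next, $(iii)\Rightarrow(i)$: assume each $\sigma_i$ is a permutation, so $\tilde C$ is a group $G$. Given two cells $g,h\in\tilde C$, I want a directed path from $g$ to $h$ in $\tilde N$; by the remark on compositions of representative functions, this means finding $j_1,\dots,j_m$ with $\tilde\sigma_{j_m}\circ\cdots\circ\tilde\sigma_{j_1}(g)=h$, i.e. $\sigma_{j_m}\circ\cdots\circ\sigma_{j_1}\circ g = h$, i.e. the product $\sigma_{j_m}\cdots\sigma_{j_1}$ equals $hg^{-1}$ in $G$. Since the $\sigma_i$ generate $G$ as a group and $G$ is finite, $hg^{-1}$ is a \emph{positive} word in the $\sigma_i$ (every element of a finite group generated by a set $S$ is a product of elements of $S$, because inverses are positive powers), so such a word exists and $\tilde N$ is strongly connected.

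The remaining implication $(i)\Rightarrow(iii)$ is the one I expect to be the main obstacle, since it has to produce permutations out of a purely graph-theoretic hypothesis. I would argue by contrapositive: suppose some $\sigma_i$ is not a bijection, so $\sigma_i$ is not surjective on $C$ (finiteness). Then $\sigma_i^a(C)$ stabilizes to a proper subset for large $a$, and more to the point I claim $Id\notin\sigma_i\circ\tilde C$, or more robustly that the cell $Id$ cannot be reached from a suitable cell: using Proposition~\ref{backconfund}, $\tilde N$ is backward connected for $Id$, but strong connectivity would also require a directed path \emph{into} every cell from $Id$ arriving back at, say, a cell not in the image of any composite involving $\sigma_i$. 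Concretely, if $\tilde N$ is strongly connected then in particular it is backward connected for \emph{every} cell, and combined with backward connectedness for $Id$ this gives, for each $\tilde c$, a word $w$ with $w\cdot\tilde c = Id$ in $\tilde C$; taking $\tilde c=\sigma_i$ yields a word $w$ with $w\circ\sigma_i=Id$ as functions on $C$, forcing $\sigma_i$ to be injective, hence (finiteness) bijective. Doing this for each $i$ gives $(iii)$. I will need to double-check the direction of composition in Definition~\ref{deffundnet} (edges go from $\sigma_i\circ\tilde c$ to $\tilde c$) so that the word identity comes out as $w\circ\sigma_i = Id$ and not $\sigma_i\circ w = Id$; the former is what gives injectivity of $\sigma_i$ directly, and if it is the latter I instead get surjectivity of $\sigma_i$, which on a finite set is equally good.

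For (b), assume $N$ is connected and $\tilde N$ is strongly connected. By (a), the representative functions $\sigma_i$ of $N$ are permutations of $C$. Then for any two cells $c,d\in C$ connected by a single edge — say an edge of type $i$ from $c$ to $d$, so $\sigma_i(d)=c$ — I have $d=\sigma_i^{-1}(c)$, and since $\sigma_i$ is a permutation of the finite set $C$, $\sigma_i^{-1}=\sigma_i^{\,r}$ for some $r\ge 1$ (the order of $\sigma_i$ minus one), giving a \emph{directed} path from $c$ to $d$ of length $r$; likewise the directed edge $c\to d$ itself gives a directed path from $d$ to $c$. Thus every undirected edge of $N$ can be traversed in both directions by directed paths, and concatenating along an undirected path (which exists between any two cells because $N$ is connected) shows any two cells of $N$ are mutually reachable by directed paths. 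Hence $N$ is strongly connected.
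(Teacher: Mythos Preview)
Your proposal is correct and follows essentially the same approach as the paper: both extract left inverses from directed paths in $\tilde N$ (your $(i)\Rightarrow(iii)$, the paper's $(i)\Rightarrow(ii)$), use finiteness to turn inverses into positive words in the generators for the converse direction, and for (b) convert an undirected path into a directed one by replacing each reversed edge with the positive power realizing $\sigma_i^{-1}$. The only difference is organizational---you link $(i)$ directly to $(iii)$ while the paper links $(i)$ to $(ii)$---and your initial contrapositive sketch is unnecessary once you arrive at the direct argument that a path from $Id$ to $\sigma_i$ gives $w\circ\sigma_i=Id$.
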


\begin{proof}
Let $N$ be a homogenous network with asymmetric inputs and $\tilde{N}$ its fundamental network with set of cells $C$ and $\tilde{C}$, respectively.

If $\tilde{N}$ is strongly connected, then there is a directed path between every pair of cells in $\tilde{C}$, in particular, between $Id$ and $\sigma\in \tilde{C}$. Thus
$$\forall_{\sigma\in \tilde{C}}\  \exists_{\sigma'\in \tilde{C}}\  : \sigma' \circ \sigma=Id,$$ 
where $\sigma'$ is a directed path from $Id$ to $\sigma$. Conversely, if $\tilde{C}$ is a group, then there is a directed path between every pair of cells in $\tilde{C}$.
This proves that $(i)$ is equivalent to $(ii)$. 

Any representative function is invertible if and only if it is a bijection. And every permutation has a finite order, i.e., exists $k$ such that $\sigma^k=Id$.
Hence the statements $(ii)$ and $(iii)$ are equivalent.

Now, to prove (b), suppose that $N$ is connected and $\tilde{N}$ is strongly connected. Then $\tilde{C}$ is a group and for every representative function $\sigma$ of $N$, there exist $\sigma^{-1}$. Note that $\sigma^{-1}$ is not always a representative function, but it is a composition of representative functions, by definition of $\tilde{C}$. We refer to $\sigma^{-1}$ has the inverse path of the connection $\sigma$. 
Moreover, for every two cells $c$ and $d$ there exists an undirected path from $c$ to $d$, because $N$ is connected. From this undirected path it is possible to get a directed path in $N$ from $c$ to $d$ by considering for each connection in the undirected path either the connection itself or its inverse path.
\end{proof}

\subsection{Depth of fundamental networks}\label{depthfundsec}

In example~\ref{drexe}, we presented a network such that the depth of the network is equal to the depth of its fundamental network. We prove now that this property is valid for every homogenous network with asymmetric inputs. Moreover, we use this fact to show that an adjacency matrix of a network is non-singular if and only if the correspondent adjacency matrix of its fundamental network is non-singular.

\begin{prop}\label{dfhnai}
Let $N$ be a homogeneous network with asymmetric inputs represented by the functions $\left(\sigma_i\right)_{i=1}^{k}$ and $\tilde{N}$ its fundamental network. Then $$\depth_i(N)=\depth_i(\tilde{N}),$$ where $i=1,\dots,k$.
\end{prop}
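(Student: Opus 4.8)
The plan is to use the characterization of depth in Lemma~\ref{dlhnai}(ii), which says that $\depth_i(N)$ is the least $p\in\mathbb{N}_0$ with $\sigma_i^p(C)\subseteq \bigcup_j R_i^j$, and similarly $\depth_i(\tilde N)$ is the least $p$ with $\tilde\sigma_i^p(\tilde C)\subseteq \bigcup_j R_i^j(\tilde N)$. By Lemma~\ref{dlhnai}(i), the rings are exactly the images of $\sigma_i^a$ (resp. $\tilde\sigma_i^a$) on each connected component, once $\sigma_i^a=\sigma_i^{a+b}$; so the condition ``$\sigma_i^p(C)$ lies in the union of rings'' is equivalent to the stabilization condition $\sigma_i^{p}=\sigma_i^{p+b}$ for the period $b$ of $\sigma_i$ — more precisely, $\depth_i(N)$ equals the smallest $a\ge 0$ for which there exists $b>0$ with $\sigma_i^a=\sigma_i^{a+b}$ (the ``pre-period'' or ``tail length'' of the semi-periodic function $\sigma_i$). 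So the whole statement reduces to: the tail length of $\sigma_i$ equals the tail length of $\tilde\sigma_i$.

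First I would make that reduction precise: set $a_0=\depth_i(N)$ and observe from Lemma~\ref{dlhnai} that $a_0$ is characterized as $\min\{a\ge 0 : \exists b>0,\ \sigma_i^{a}=\sigma_i^{a+b}\}$, and likewise $\depth_i(\tilde N)=\min\{a\ge 0:\exists b>0,\ \tilde\sigma_i^{a}=\tilde\sigma_i^{a+b}\}$. Next, the key computational observation: by the very definition of the fundamental network, $\tilde\sigma_i^m(\tilde c)=\sigma_i^m\circ\tilde c$ for every $\tilde c\in\tilde C$ and every $m\ge 0$; in particular $\tilde\sigma_i^m(Id)=\sigma_i^m$. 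Hence if $\tilde\sigma_i^a=\tilde\sigma_i^{a+b}$ then evaluating at $Id$ gives $\sigma_i^a=\sigma_i^{a+b}$, so $\depth_i(N)\le\depth_i(\tilde N)$. Conversely, if $\sigma_i^a=\sigma_i^{a+b}$, then for every $\tilde c\in\tilde C$ we get $\tilde\sigma_i^a(\tilde c)=\sigma_i^a\circ\tilde c=\sigma_i^{a+b}\circ\tilde c=\tilde\sigma_i^{a+b}(\tilde c)$, so $\tilde\sigma_i^a=\tilde\sigma_i^{a+b}$, giving $\depth_i(\tilde N)\le\depth_i(N)$. Combining the two inequalities finishes the proof.

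I would present it in that order: (1) recall/restate the ``tail length'' characterization of depth coming from Lemma~\ref{dlhnai}; (2) record the identity $\tilde\sigma_i^m(\tilde c)=\sigma_i^m\circ\tilde c$; (3) prove $\sigma_i^a=\sigma_i^{a+b}\iff\tilde\sigma_i^a=\tilde\sigma_i^{a+b}$, using evaluation at $Id$ for one direction and left-composition-with-$\tilde c$ for the other; (4) conclude equality of the minima, hence of the depths.

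The main obstacle — really the only subtle point — is step (1): one must be careful that Lemma~\ref{dlhnai}(ii) genuinely yields ``$\depth_i(N)=$ least $a$ such that $\sigma_i$ stabilizes modulo its period from exponent $a$ on'', i.e. that $\sigma_i^p(C)\subseteq\bigcup_j R_i^j$ holds exactly when $\sigma_i^p$ has landed in the eventual image $\sigma_i^a(C)$ (with $a$ the pre-period). This is immediate once one notes, via Lemma~\ref{dlhnai}(i), that $\bigcup_j R_i^j=\bigcup_j\sigma_i^a(C_i^j)=\sigma_i^a(C)$ and that $\sigma_i^p(C)\subseteq\sigma_i^a(C)$ iff $p\ge a$ (since the images $\sigma_i^p(C)$ are nested decreasing and become constant exactly at $p=a$); but it is worth spelling out so the reduction is airtight. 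Everything after that is the two-line computation above, symmetric in $N$ and $\tilde N$ once one uses $\tilde\sigma_i^m(\tilde c)=\sigma_i^m\circ\tilde c$.
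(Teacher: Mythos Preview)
Your proof is correct and uses the same engine as the paper's: Lemma~\ref{dlhnai} together with the identity $\tilde\sigma_i^{\,m}(\tilde c)=\sigma_i^{\,m}\circ\tilde c$, from which $\sigma_i^{a}=\sigma_i^{a+b}\iff\tilde\sigma_i^{\,a}=\tilde\sigma_i^{\,a+b}$ follows immediately. The only difference is organizational: by first recasting $\depth_i$ as the pre-period (tail length) of $\sigma_i$, you make the two inequalities completely symmetric---evaluate at $Id$ for one, left-compose with $\tilde c$ for the other---whereas the paper proves $\tilde p_i\le p_i$ exactly as you do but obtains $\tilde p_i\ge p_i$ by exhibiting the path $Id,\sigma_i,\ldots,\sigma_i^{p_i}$ in $\tilde N_i$ and arguing it is the shortest path from $Id$ to its ring, which is your ``evaluate at $Id$'' step unpacked geometrically.
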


\begin{proof}
Let $N$ be a homogeneous network with asymmetric inputs represented by  $\left(\sigma_i\right)_{i=1}^{k}$, $C$ its set of cells and $\tilde{N}$ its fundamental network. Fix $1\leq i\leq k$.
Denote the connected components of $N_i$ by  $C^1_i,\dots,C^m_i$ and the corresponding rings by $R_{i}^{1},\dots, R_{i}^{m}$. Denote the connected components of $\tilde{N}_i$ by  $\tilde{C}^1_i,\dots,\tilde{C}^{\tilde{m}}_i$ and the corresponding rings by $\tilde{R}_{i}^{1},\dots, \tilde{R}_{i}^{\tilde{m}}$. Let  $p_i=\depth_i(N)$ and $\tilde{p_i}=\depth_i(\tilde{N})$.

By lemma~\ref{dlhnai}~(ii), we have that $\sigma_i^{p_i}(C)\subseteq R_i^{1}\cup\dots\cup R_i^{m}$. For each connected component $C^j_i$ of $N_i$. Since cycles of $N_i$ in $C^j_i$ have to start in a cell of $R_i^j$ and travel by the other cells in $R_i^j$ to reach the initial point, we have that $\sigma_i^{p_i}(C^j_i)=\sigma_i^{p_i+r}(C^j_i)$ if and only if $r$ is a multiple of $|R_{i}^{j}|$. 
Then $\sigma_i^{p_i}=\sigma_i^{p_i+r}$, if $r=\lcm\{|R_{i}^{1}|,\dots,|R_{i}^{k}|\}$, where $\lcm$ is the least common multiple.

Note that $\tilde{\sigma}_i^{p_i}=\tilde{\sigma}_i^{p_i+r}$, because $\tilde{\sigma}_i^{p_i}(\sigma)=\sigma_i^{p_i}\circ\sigma=\sigma_i^{p_i+r}\circ\sigma=\tilde{\sigma}_i^{p_i+r}(\sigma)$.  By lemma~\ref{dlhnai}~(i), 
$$\bigcup_{j=1}^{\tilde{m}}\tilde{R}_{i}^{j}=\bigcup_{j=1}^{\tilde{m}}\tilde{\sigma}_i^{p_i}(\tilde{C}^j_i)=\tilde{\sigma}_i^{p_i}(\tilde{C})$$
Hence $\tilde{p_i}\leq p_i$, by lemma~\ref{dlhnai}~(ii).

From $\tilde{\sigma}_i^{p_i}=\tilde{\sigma}_i^{p_i+r}$, we also know that $\sigma_i^{p_i},\dots,\sigma_i^{p_i+r-1}$ is a ring of $\tilde{N}_i$, because $(\sigma_i^{p_i},\dots,\sigma_i^{p_i+r-1},\sigma_i^{p_i+r}=\sigma_i^{p_i})$ is a cycle in $\tilde{N}_i$.
The directed path $Id=\sigma_i^0,\sigma_i^1,\dots,\sigma_i^{p_i-1},\sigma_i^{p_i}$ is the shortest directed path in $\tilde{N}_i$ from $Id$ to a cell in this ring. Then we have that $\tilde{p_i}\geq p_i$ and thus conclude that $\tilde{p_i}= p_i$.
\end{proof}

A network can be represented by its \emph{adjacency matrices} $A_i$, one for each edge type $i$. More precisely, if the network has $n$ cells, say $C=\left\{1,\dots,n \right\}$, then the matrix $A_i$ is an $n\times n$ matrix, where the entry $(A_i)_{c\:c'}$ denotes the number of edges of type $i$ from $c'$ to $c$. 

\begin{coro}
Let $N$ be a homogeneous network with asymmetric inputs and $\tilde{N}$ its fundamental network. Denote by $A_i$ the adjacency matrix of $N$ and $\tilde{A}_i$ the adjacency matrix of $\tilde{N}$, for an edge type $i$. 

Then $A_i$ is non-singular if and only if $\tilde{A}_i$ is non-singular.
\end{coro}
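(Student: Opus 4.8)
The plan is to connect non-singularity of an adjacency matrix $A_i$ to the depth $\depth_i(N)$, and then invoke Proposition~\ref{dfhnai}. First I would observe that since $N$ is a homogeneous network with asymmetric inputs, the matrix $A_i$ is exactly the $0$-$1$ matrix of the function $\sigma_i$: its columns are standard basis vectors, namely $A_i e_{c'} = e_{\sigma_i(c')}$. Thus $A_i$ is the matrix of the linear map sending $e_{c'} \mapsto e_{\sigma_i(c')}$, and for any power $A_i^p$ the column indexed by $c'$ is $e_{\sigma_i^p(c')}$. The key structural fact is that $A_i$ is non-singular if and only if $\sigma_i$ is a bijection, i.e.\ a permutation of $C$: if $\sigma_i$ is a permutation, $A_i$ is a permutation matrix and hence invertible; conversely if $\sigma_i$ is not injective, two columns of $A_i$ coincide, so $A_i$ is singular.

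Next I would translate the permutation condition into a statement about depth. Recalling the structure of $N_i$ (a ring together with feed-forward trees attached at the ring), $\sigma_i$ is a permutation of $C$ precisely when every cell lies on a ring, i.e.\ when there are no nontrivial feed-forward tails — equivalently $\depth_i(N) = 0$. Indeed, by Lemma~\ref{dlhnai}~(ii), $\depth_i(N) = 0$ iff $\sigma_i^0(C) = C \subseteq \bigcup_j R_i^j$, which says every cell is in a ring, which by Lemma~\ref{dlhnai}~(i) (with $a = 0$) forces $\sigma_i$ to map $C$ bijectively onto $\bigcup_j R_i^j = C$. So we have the chain: $A_i$ non-singular $\iff$ $\sigma_i$ is a permutation $\iff$ $\depth_i(N) = 0$. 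The identical reasoning applied to $\tilde{N}$ and $\tilde\sigma_i$ gives $\tilde{A}_i$ non-singular $\iff$ $\depth_i(\tilde{N}) = 0$.

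Finally I would apply Proposition~\ref{dfhnai}, which states $\depth_i(N) = \depth_i(\tilde{N})$. Combining the three equivalences: $A_i$ is non-singular $\iff$ $\depth_i(N) = 0 \iff \depth_i(\tilde{N}) = 0 \iff \tilde{A}_i$ is non-singular. This completes the proof.

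I do not anticipate a serious obstacle here; the corollary is essentially a repackaging of Proposition~\ref{dfhnai}. The one point requiring a little care is the equivalence "$A_i$ non-singular $\iff \sigma_i$ permutation'': one direction is immediate from a repeated column, and the other is immediate because a permutation matrix is invertible. A secondary small check is the identification "$\depth_i(N) = 0 \iff \sigma_i$ permutation'', which follows cleanly from Lemma~\ref{dlhnai} by taking $a = 0$ (legitimate since every representative function is semi-periodic, and one may always enlarge $a$; in particular the case $a=0$ applies exactly when $\sigma_i$ is already periodic, i.e.\ a permutation). Alternatively, and perhaps more transparently, one can argue directly: $\sigma_i$ is a permutation iff it has finite order, iff no cell is ever "absorbed'' into a proper subset under iteration, iff the feed-forward part of each component of $N_i$ is trivial, iff $\depth_i(N) = 0$.
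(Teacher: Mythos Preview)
Your proof is correct and follows the same overall scheme as the paper: both reduce the statement to the equivalence ``$A_i$ non-singular $\iff \depth_i(N)=0$'' and then invoke Proposition~\ref{dfhnai}. The paper justifies that equivalence by listing the eigenvalues of $A_i$ (the $|R_i^j|$-th roots of unity from each ring, together with the eigenvalue $0$ precisely when $\depth_i(N)>0$), whereas you argue more directly that $A_i$ is non-singular iff $\sigma_i$ is a permutation of $C$, which in turn holds iff every cell already lies on a ring, i.e.\ $\depth_i(N)=0$. Your route is a bit more elementary; the paper's route yields the full spectrum of $A_i$ as a byproduct.

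One small slip worth correcting: with the paper's convention $(A_i)_{c\,c'}=\#\{\text{type-}i\text{ edges from }c'\text{ to }c\}$ and with $\sigma_i(c)$ the \emph{source} of the unique type-$i$ edge into $c$, it is the \emph{rows} of $A_i$ that are standard basis vectors (row $c$ equals $e_{\sigma_i(c)}^T$), not the columns; equivalently $A_i^T e_c = e_{\sigma_i(c)}$. This does not affect the argument, since $A_i$ is singular iff $A_i^T$ is, and when $\sigma_i$ is not injective it is two \emph{rows} of $A_i$ that coincide.
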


\begin{proof}
The eigenvalues of the adjacency matrix of a homogeneous network with asymmetric inputs for an edge of type $i$, $A_i$, are $1,w_j,w_j^2,\dots, w_j^{r_j-1}$ where $r_j= |R_{i}^{j}|$, $w_j=\exp^{2\pi\imath/r_j}$, $R_i^j$ is the ring of type $i$ of $N$ in $C_i^j$ and $C^1_i,\dots, C^m_i$ are the connected components of $N_i$ and $0$ if $\depth_i(N)\neq 0$. Hence $A_i$ is non-singular if and only if $\depth_i(N)= 0$ if and only if $\depth_i(\tilde{N})= 0$ if and only if $\tilde{A}_i$ is non-singular.
\end{proof}

\subsection{Rings of fundamental networks}\label{ringsfundsec}

We consider now the relation between the size of the rings in a network and of those in its fundamental network. Specifically, we show that the size of a ring in a fundamental network is a (least common) multiple of some ring's sizes in the network. Moreover we use this result to fully describe the fundamental network of a network with only one edge type.

\begin{prop}\label{sizeringfund}
Let $N$ be a homogeneous network with asymmetric inputs represented by the functions $\left(\sigma_i\right)_{i=1}^{k}$, $C$ the set of cells of $N$ and $\tilde{N}$ its fundamental network. Fix $1\leq i\leq k$. Let $C^1_i,\dots,C^m_i$ be the connected components of $N_i$ and $R_{i}^{1},\dots, R_{i}^{m}$ the corresponding rings. Analogously, let $\tilde{C}^1_i,\dots,\tilde{C}^{\tilde{m}}_i$ be the connected components of $\tilde{N}_i$ and $\tilde{R}_{i}^{1},\dots, \tilde{R}_{i}^{\tilde{m}}$ the corresponding rings. 
If $1\leq j\leq \tilde{m}$ and $ \gamma\in \tilde{C}^j_i$, then
$$|\tilde{R}_{i}^{j}|=\lcm\left\{|R_{i}^{j'}|: C^{j'}_i\cap\gamma(C) \neq \emptyset \right\}.$$
Moreover, there exists $1\leq j\leq \tilde{m}$ such that 
$|\tilde{R}_{i}^{j}|=\lcm\left\{|R_{i}^{1}|, \dots, |R_{i}^{m}|\right\}$.
\end{prop}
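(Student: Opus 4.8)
The plan is to analyze the structure of a connected component $\tilde C^j_i$ of $\tilde N_i$ by tracking how the edge-type-$i$ function $\tilde\sigma_i$ acts on cells $\gamma \in \tilde C$, namely $\tilde\sigma_i(\gamma) = \sigma_i \circ \gamma$. Fix $\gamma \in \tilde C^j_i$. First I would observe that, by Lemma~\ref{dlhnai}(i), if $\sigma_i^a = \sigma_i^{a+b}$ then the ring $R_i^{j'}$ equals $\sigma_i^a(C^{j'}_i)$, so the size of $R_i^{j'}$ is exactly the period of $\sigma_i$ on the eventual image inside $C^{j'}_i$; equivalently, $|R_i^{j'}|$ is the smallest $t>0$ with $\sigma_i^{a+t} = \sigma_i^a$ on $C^{j'}_i$ for $a$ large. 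Now $\tilde\sigma_i^p(\gamma) = \sigma_i^p \circ \gamma$, so the orbit of $\gamma$ under $\tilde\sigma_i$ becomes periodic precisely when $\sigma_i^p \circ \gamma = \sigma_i^{p+q}\circ\gamma$, i.e. when $\sigma_i^p$ and $\sigma_i^{p+q}$ agree on the set $\gamma(C)$. The key combinatorial fact is then: for $a$ sufficiently large, $\sigma_i^{a} = \sigma_i^{a+q}$ on $\gamma(C)$ if and only if $\sigma_i^{a} = \sigma_i^{a+q}$ on each $C^{j'}_i$ that $\gamma(C)$ meets, which happens if and only if $|R_i^{j'}|$ divides $q$ for every such $j'$. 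Hence the minimal such $q$ — which is $|\tilde R_i^j|$ by the same Lemma~\ref{dlhnai}(i) applied to $\tilde N_i$ — equals $\lcm\{|R_i^{j'}| : C^{j'}_i \cap \gamma(C) \neq \emptyset\}$.

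The main steps, in order, are: (1) choose $a,b$ with $\sigma_i^a = \sigma_i^{a+b}$ (semi-periodicity of $\sigma_i$, as recalled before Lemma~\ref{dlhnai}), so that for $p \geq a$ the point $\sigma_i^p(c)$ lies in the ring $R_i^{j'}$ whenever $c \in C^{j'}_i$; (2) note $\gamma(C)$ is a finite subset of $C$, partitioned by which connected components $C^{j'}_i$ it meets, and that $\sigma_i$ permutes each ring $R_i^{j'}$ cyclically with period $|R_i^{j'}|$; (3) deduce that $\sigma_i^{a+q} = \sigma_i^{a+q'}$ on $\gamma(C)$ iff $q \equiv q' \pmod{|R_i^{j'}|}$ for all relevant $j'$, so the period of the $\tilde\sigma_i$-orbit of $\gamma$ is the stated $\lcm$; (4) invoke Lemma~\ref{dlhnai}(i) for $\tilde N_i$ to identify this period with $|\tilde R_i^j|$ (the orbit of $\gamma$ under $\tilde\sigma_i$ eventually enters $\tilde R_i^j$, and the ring is exactly the cycle it lands on, whose length is that period). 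For the "moreover" clause, I would take $\gamma = Id$, so $\gamma(C) = C$ meets every $C^{j'}_i$, $j' = 1,\dots,m$; then $Id$ lies in some component $\tilde C^j_i$ of $\tilde N_i$, and the formula just proved gives $|\tilde R_i^j| = \lcm\{|R_i^1|,\dots,|R_i^m|\}$.

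The step I expect to be the main obstacle is step (3) together with the precise identification in step (4): one must be careful that $\gamma(C)$ meeting $C^{j'}_i$ really does force the full period $|R_i^{j'}|$ to appear (not a proper divisor), which uses that $R_i^{j'}$ is a single cycle of $\sigma_i$ so that $\sigma_i$ restricted to it has order exactly $|R_i^{j'}|$ and acts freely on it; and one must justify that for large $a$, $\sigma_i^a(\gamma(C)) \subseteq \bigcup_{j'} R_i^{j'}$ so that the periodic behavior of $\sigma_i^p\circ\gamma$ is governed entirely by the ring components it hits. A minor care point is the distinction between "eventually periodic with period $q$" and "the ring has size $q$": since $\tilde N_i$ also has the source-plus-feedforward structure, the ring $\tilde R_i^j$ is the unique cycle in $\tilde C^j_i$, and its length is the minimal period of $\tilde\sigma_i$ on the eventual image, matching the $\lcm$ computed above. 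Everything else is bookkeeping with Lemma~\ref{dlhnai} and the elementary arithmetic of least common multiples.
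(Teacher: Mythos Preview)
Your proposal is correct and follows essentially the same approach as the paper's proof: both track the $\tilde\sigma_i$-orbit of $\gamma$ via $\sigma_i^p\circ\gamma$, reduce the question to when $\sigma_i^a$ and $\sigma_i^{a+q}$ agree on $\gamma(C)$, identify the minimal period as the $\lcm$ of the sizes of the rings $R_i^{j'}$ whose components meet $\gamma(C)$, and take $\gamma=Id$ for the ``moreover'' clause. The only cosmetic difference is that the paper fixes the threshold $a$ to be $p_i=\depth_i(N)$ (using Lemma~\ref{dlhnai} and the preceding proposition), whereas you use an arbitrary $a$ with $\sigma_i^a=\sigma_i^{a+b}$; and the paper compresses your careful step~(3) into the single remark ``this cycle does not repeat cells, because $r^\gamma$ is the least common multiple.''
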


\begin{proof}
Let $N$ be a homogeneous network with set of cells $C$ and asymmetric inputs represented by the functions $\left(\sigma_i\right)_{i=1}^{k}$. Let $\tilde{N}$ be its fundamental network. Fix $1\leq i\leq k$. Let $C^1_i,\dots,C^m_i$ be the connected components of $N_i$ and $R_{i}^{1},\dots, R_{i}^{m}$ the corresponding rings. Analogously, let $\tilde{C}^1_i,\dots,\tilde{C}^{\tilde{m}}_i$ be the connected components of $\tilde{N}_i$ and $\tilde{R}_{i}^{1},\dots, \tilde{R}_{i}^{\tilde{m}}$ the corresponding rings. Let $p_i=\depth_i(N)=\depth_i(\tilde{N})$. 
Choose $j$ and $\gamma$ such that $1\leq j\leq \tilde{m}$ and $ \gamma\in \tilde{C}^j_i$.
 Define $J=\{j': \gamma(C)\cap C_i^{j'}\neq \emptyset\}$, $r^{\gamma}= \lcm\{|R_i^{j'}|:j'\in J|\}$ and $C^{\gamma}=\cup_{j'\in J}C_i^{j'}$.

By lemma~\ref{dlhnai}, $$\sigma^{p_i}_i(C^{\gamma})=\bigcup_{j'\in J}R_i^{j'}.$$
Note that $\left.\sigma^{p_i}_i\right|_{C^{\gamma}}= \left.\sigma^{p_i+r^{\gamma}}_i\right|_{C^{\gamma}}$, because $r^{\gamma}$ is a multiple of $|R_i^{j'}|$, for every $j'\in J$. Then $\tilde{\sigma}^{p_i}_i \circ \gamma =\sigma^{p_i}_i \circ \gamma = \sigma^{p_i+r^{\gamma}}_i\circ \gamma =\tilde{\sigma}^{p_i+r^{\gamma}}_i\circ \gamma$ and $(\sigma_i^{p_i}\circ\gamma, \dots, \sigma^{p_i+r^{\gamma}}_i\circ\gamma)$ is a cycle in $\tilde{N}_i$. Since $\gamma\in \tilde{C}^j_i$, we have that $\sigma_i^{p_i}\circ \gamma, \dots, \sigma^{p_i+r^{\gamma}-1}_i\circ\gamma\in \tilde{C}^j_i$ and so the ring of $\tilde{N}_i$ in $\tilde{C}^j_i$ is $\tilde{R}_i^{j}=\{\sigma^{p_i+1}_i\circ\gamma,\dots, \sigma^{p_i+r^{\gamma}}_i\circ\gamma\}$. This cycle does not repeat cells, because $r^\gamma$ is the least common multiple. Thus 
$$|\tilde{R}_i^{j}|= r^{\gamma}=\lcm\left\{|R_{i}^{j'}|: C^{j'}_i\cap\gamma(C) \neq \emptyset\right\}.$$

The second part of the result follows from taking $\gamma=Id_C$.
\end{proof}

Propositions~\ref{dfhnai} and \ref{sizeringfund} can be used to describe the fundamental network of a homogenous network with only one edge type. 

\begin{defi}[{\cite[definition 3.1.]{M14}, \cite[definition 2.4]{G14}}]
Let $N$ be a homogeneous network with asymmetric inputs that has only one edge type. We say that $N$ is a \emph{loop-chain} with size $l\geq 1$ and $p\geq 0$, if $N$ has $l+p$ cells, it has a unique source component with $l$ cells and the depth of $N$ is $p$. 
\end{defi}

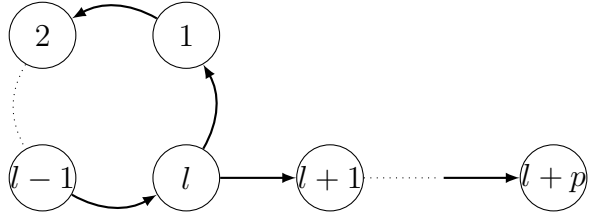
\begin{figure}[h]
\center
\begin{tikzpicture}
\node (n1) [circle,draw,label=center:$1$]   {\phantom{00}};

\node (n2) [circle,draw,label=center:$2$] [left=of n1] {\phantom{00}};
\node (n3) [circle,draw,label=center:$l-1$] [below=of n2] {\phantom{00}};
\node (n4) [circle,draw,label=center:$l$] [right=of n3] {\phantom{00}};
\node (n5) [circle,draw,label=center:$l+1$] [right=of n4] {\phantom{00}};
\node (n6)  [right=of n5] {\phantom{\hspace{-5mm}}};
\node (n7) [circle,draw,label=center:$l+p$] [right=of n6] {\phantom{00}};

\draw[-latex, thick] (n1) to [bend right] (n2);
\draw[dotted] (n2) to [bend right] (n3);
\draw[-latex, thick] (n3) to [bend right] (n4);
\draw[-latex, thick] (n4) to [bend right] (n1);
\draw[-latex, thick] (n4) to  (n5);
\draw[dotted] (n5) to (n6);
\draw[-latex, thick] (n6) to (n7);
\end{tikzpicture}
\caption{The fundamental network of a homogenous network with asymmetric inputs $N$ having only one edge type is a loop-chain with size $l$ and $p$, where $l$ is the least common multiple of all ring's sizes in $N$ and $p$ is the depth of $N$.}
\label{fund1input}
\end{figure}

\begin{coro}
Let $N$ be a homogeneous network with asymmetric inputs and only one edge type. If $l$ is the least common multiple of the size of all the rings in $N$ and $p$ is the depth of $N$, then the fundamental network of $N$ is a loop-chain with size $l$ and $p$.
\end{coro}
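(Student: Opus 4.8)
The plan is to assemble the corollary directly from Proposition~\ref{dfhnai} and Proposition~\ref{sizeringfund}, since almost all the work has already been done. Let $N$ be a homogeneous network with asymmetric inputs and a single edge type, represented by one function $\sigma=\sigma_1:C\to C$. First I would observe that, by the discussion preceding Definition~\ref{dlhnai} (semi-periodicity of $\sigma$), the graph $N_1=N$ decomposes into connected components $C^1_1,\dots,C^m_1$, each the union of a ring $R^j_1$ and feed-forward trees hanging off it. The fundamental network $\tilde N$ also has a single edge type, so $\tilde N_1=\tilde N$ and the same structural description applies: $\tilde N$ is a disjoint union of components, each a ring with feed-forward trees attached.

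The key point to extract is that $\tilde N$ is \emph{connected}, hence has a single component, and therefore is exactly one ring with feed-forward trees, i.e.\ a loop-chain. For connectedness I would use Proposition~\ref{backconfund}: $\tilde N$ is backward connected for $Id$, meaning every cell of $\tilde C$ is joined to $Id$ by a directed path, a fortiori by an undirected path; so all cells lie in one connected component. Thus $\tilde m=1$, $\tilde N$ has a unique source component $\tilde R^1_1$, and $\tilde N$ has $|\tilde C|=l'+p'$ cells where $l'=|\tilde R^1_1|$ and $p'=\depth_1(\tilde N)=\depth(\tilde N)$ is the number of cells outside the ring.

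It then remains to identify $l'$ and $p'$. By Proposition~\ref{dfhnai}, $p'=\depth_1(\tilde N)=\depth_1(N)=p$, which is the stated depth. By the ``moreover'' clause of Proposition~\ref{sizeringfund} (taking $\gamma=Id_C$, whose image is all of $C$, so the index set $J$ is $\{1,\dots,m\}$), there is a ring $\tilde R^j_1$ of $\tilde N_1$ with $|\tilde R^j_1|=\lcm\{|R^1_1|,\dots,|R^m_1|\}=l$; but $\tilde N_1$ has only the one ring $\tilde R^1_1$, so $l'=|\tilde R^1_1|=l$. Hence $\tilde N$ has $l+p$ cells, a unique source component of size $l$, and depth $p$: by definition it is a loop-chain with size $l$ and $p$, as claimed.

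I do not expect a serious obstacle here; the statement is essentially a packaging of the two preceding propositions. The only point requiring a moment's care is the reduction to a single component, i.e.\ invoking backward connectedness of $\tilde N$ to rule out $\tilde m>1$, and then matching the count $|\tilde C|=l+p$ against the definition of loop-chain — one should check that ``unique source component with $l$ cells'' together with ``depth $p$'' indeed forces exactly $l+p$ cells, which follows from the ring-plus-feed-forward structural description since every non-ring cell sits at a well-defined positive distance $\le p$ from the ring and such distances are realized.
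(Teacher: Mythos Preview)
Your overall strategy matches the paper's: invoke backward connectedness of $\tilde N$ (Proposition~\ref{backconfund}) to get a single connected component, then read off the ring size from Proposition~\ref{sizeringfund} and the depth from Proposition~\ref{dfhnai}. That part is fine and is exactly what the paper does.

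The gap is in the final cell count. You assert that $|\tilde C|=l'+p'$ where $p'=\depth(\tilde N)$ ``is the number of cells outside the ring,'' and later justify this by saying that every non-ring cell sits at some distance $\le p$ from the ring and such distances are realised. But depth is the \emph{maximum} distance to the ring, not the number of non-ring cells; a connected one-edge-type network can easily have several cells at the same distance (e.g.\ a ring of size $2$ with two separate tails of length $1$ has ring size $2$, depth $1$, and $4$ cells). So ``ring of size $l$ and depth $p$'' does not by itself force $l+p$ cells, and the definition of loop-chain requires exactly $l+p$ cells.

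The paper closes this gap with a short contradiction argument: having already exhibited the loop-chain on cells $1,\dots,l+p$ inside $\tilde N$, it supposes there is a further cell $j$, notes that $j$ must receive its unique input from one of $1,\dots,l+p$, and shows this either breaks backward connectedness (if the input comes from a cell $<l+p$) or increases the depth (if it comes from cell $l+p$). An equally clean alternative, which you could use instead, is to observe directly that with a single generator the semigroup is $\tilde C=\{Id,\sigma,\sigma^2,\dots\}$, and semi-periodicity $\sigma^{p}=\sigma^{p+l}$ with $p,l$ minimal gives $|\tilde C|=p+l$ on the nose. Either way, you need one of these extra steps; the structural description alone does not finish the count.
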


\begin{proof}
Let $N$ be a homogeneous network with asymmetric inputs that has only one edge type, $l$ the least common multiple of the size of all the rings in $N$, $p$ the depth of $N$ and $\tilde{N}$ its fundamental network. 

We know by proposition~\ref{backconfund} that $\tilde{N}$ is backward connected and so $\tilde{N}$ has only one connected component. The size of the ring of that connected component is equal to the least common multiple of the sizes of rings in $N$, see proposition~\ref{sizeringfund}. By proposition~\ref{dfhnai}, we also know that $\depth(N)=\depth(\tilde{N})$. 
Then $\tilde{N}$ has at least the loop-chain with size $l$ and $p$ described in figure~\ref{fund1input}. 


Next, we prove that $\tilde{N}$ has only $l+p$ cells. Suppose that there exists more than $l+p$ cells. Then there is a cell $j>l+p$ that receives an edge from the cells $1,\dots,l+p$, because $\tilde{N}$ has only one connected component and the first $l+p$ cells already receive an edge from the first $l+p$ cells. If $j$ receives an edge from the cells $1,\dots,l+p-1$, then $\tilde{N}$ is not backward connected. If $j$ receives an edge from the cell $l+p$, then $\depth(\tilde{N})>p$. Hence $\tilde{N}$ is a loop-chain with size $l$ and $p$ described in figure~\ref{fund1input}.
\end{proof}

\end{document}